\let\e\epsilon
\numberwithin{equation}{section}
\newcommand{\ud}{\mathrm{d}}
\newtheorem{thm}{Theorem}[section]
\newtheorem{lem}{Lemma}[section]
\theoremstyle{remark}
\numberwithin{equation}{section}
\newcommand{\R}{\mathbb{R}}
\newcommand{\bL}{\text{{\bf L}}}
\newcommand{\bH}{\text{{\bf H}}}
\newcommand{\tr}{\text{tr}}
\newcommand{\Pj}{ \text{{\bf P}}}
\title[Phase-field model for thrombus and blood flow]{ Local well-posedness of a three-dimensional phase-field model for thrombus and blood flow}
\author[W. Kim, K. Tawri and R. Temam]{Woojeong Kim$^{1}$, Krutika Tawri$^{1,*}$ and Roger Temam$^{1}$}
\email{ktawri@iu.edu}
\email{wki1@iu.edu }
\email{temam@indiana.edu}
\thanks{*Corresponding author. Email address: ktawri@iu.edu}
\date{}
\begin{document}
	\maketitle
	{\vspace{-0.3in}
		\centering{$^{1}$ Department of Mathematics and Institute for Scientific Computing and Applied Mathematics,}\\
		\centering{Indiana University,}
		\centering{Bloomington, IN 47405, USA.\\}
}
	\begin{abstract}
	In this article we consider a fluid-structure interactions model on a three dimensional bounded domain, that describes the mechanical interaction between blood flow and a thrombus with Hookean elasticity. The interface between the two phases is given by a smooth transition layer, diffuse with a finite thickness. We derive various a priori estimates and prove local well-posedness results using the Faedo-Galerkin method.
	\end{abstract}
Mathematics Classification Code: 35D35, 35G31, 35Q35, 76T30  \\
Keywords: Navier-Stokes equations, Cahn-Hilliard equations, Oldroyd model, local strong solutions, uniqueness.

	\section{Introduction}
	
	Historically, phase-field methods have been commonly used to model two-phase flows of macroscopically immiscible fluids. However recently there have been some efforts in using phase-field methods to model fluid-structure
	interactions (FSI).	In \cite{SXJ} and \cite{MAA}, the authors give a fully Eulerian description of the velocity field in the fluid and the
	elastic domain by coupling Oldroyd-B type equations with volume preserving Allen-Cahn and Cahn-Hilliard type equations respectively. 
	
	In this article we study an FSI problem proposed in \cite{ZYLHK} (see also \cite{YZHK}) to model the mechanical interaction between blood flow and a thrombus with Hookean elasticity, using a phase-field method to describe a two-phase system, with the interface between two different phases given by a thin smooth transition layer. 

Let $x(t,\cdot)$ be a time-dependent family of orientation-preserving
diffeomorphisms. Let $X(x,t)$ be the
corresponding reference map i.e. $X( \cdot,t)$ is the inverse of $x(\cdot,t)$. The velocity field
$u(x, t)$ is defined as:
$$u(x,t)=\frac{\ud x(t,X)}{\ud t}|_{X=X(x,t)}.$$
The deformation gradient $F$ is given by 
$$F(x,t)=\frac{\partial x(t,X)}{\partial X}|_{X=X(x,t)}.$$
Using the chain differentiation rule it can be seen that $F$ satisfies
$$F_t + u\cdot \nabla F=\nabla u F,$$
which is to be understood component-wise as $F^{ij}_t + \sum_{k=1}^d u^k\partial_kF^{ij}=\sum_{k=1}^d\partial_ku^iF^{kj}$ for $1\leq i,j\leq d$. Here we use the notation $F^{ij}=\frac{\partial x^i}{\partial X^j}$ and $[\nabla u]_{ij}:=\frac{\partial u^i}{\partial x^j}$.
The incompressibility is {expressed in the Lagrangian coordinates} by the equation: $$\det F \equiv 1,$$
and it is  expressed in the Eulerian coordinates by the equation div $u=0$ which appears below in \eqref{pde}$_2$. See the explanations in \cite{MT}.
For a bounded domain $\Omega \subset \R^d$ with a sufficiently smooth boundary $\partial \Omega$ and $T>0$, we consider the following governing equations on $\Omega \times (0,T)$
\begin{equation}\begin{split}\begin{cases}\label{pde}
\rho(\frac{\partial u}{\partial t} + u \cdot \nabla u) + \nabla p - \nabla \cdot (\eta(\phi)\nabla u) = -\lambda \nabla \cdot (\nabla \phi \otimes \nabla \phi) \\
\hspace{2.2in}+\nabla \cdot (\lambda_e(1-\phi)(FF^T-I))-\eta(\phi)\frac{(1-\phi)u}{\kappa(\phi)},\\
\nabla \cdot u =0,\\
\frac{\partial F}{\partial t} + u\cdot \nabla F=\nabla u F,\\
\frac{\partial \phi}{\partial t} + u \cdot \nabla \phi =\tau \Delta \mu ,\\
\mu=-\lambda \Delta \phi  + \lambda\gamma f'(\phi)-\frac{\lambda_e}{2}\tr(FF^T-I),
\end{cases}\end{split}\end{equation}	
where $u, p,\rho,\eta, \kappa$ are the velocity, pressure, mass density, dynamic viscosity and
permeability, respectively, and $\phi \in [0,1], \mu$ are the phase-field variable and the so-called chemical potential respectively. The phase-field variable $\phi = 1$ represents
the blood, $0 < \phi < 1$ represents a mixture of blood and thrombus, and $\phi = 0 $ represents the
thrombus. The parameters 
$\gamma, \tau, \lambda$ and $\lambda_e$ stand for the interfacial mobility, relaxation parameter,
mixing energy density, and visco-elastic modulus, respectively and are assumed to be positive constants. In this article, we shall take the function $f$ to be the double-well potential $f(\phi)= \frac{(\phi-1)^2\phi^2}{4h^2},$ where $h$ is the interfacial thickness. The above equations \eqref{pde} are subjected to the following initial and boundary conditions
\begin{equation}\label{bc}
\begin{split}
u=0, \partial_{\bf n}\mu= \partial_{\bf n}\phi&=0 \quad \text{on } \partial \Omega \times (0,T),\\
u(\cdot,0) = u_0, \, \phi(\cdot,0)&=\phi_0,\, F(\cdot,0)=F_0 \quad \text{in } \Omega,
\end{split}
\end{equation}
where ${\bf n}$ is the unit vector normal to the boundary $\partial \Omega$, pointing outward.

Since the works \cite{LLZ05}, \cite{LW} there have been several results concerning the viscoelastic systems. In \cite{GS}, the authors studied the case of \eqref{pde} with $\phi=0$ and an additional linear damping term in the $F$-equation, and proved the global existence of a small smooth solution. When the contribution of the symmetric part of $\nabla u$ in the constitutive equation is neglected, the global existence of weak solutions with general initial data was proven in \cite{LM}. In case of bounded domain, the global existence of strong solutions for small initial data was proven in \cite{LZ08}.

The Cahn-Hilliard-Navier-Stokes system, to be precise the model H (see e.g. \cite{AMW}, \cite{HH77},  \cite{GPV}) obtained by setting $F=0$, has been studied extensively. The mathematical analysis of the system is well established for classical boundary conditions, specially in the case of regular nonlinear terms $f$. In three dimensions, the local existence and uniqueness of strong solutions is known whereas in two dimensions the existence of a global strong solution is known. We refer the reader to \cite{St97}, \cite{Bo99}, \cite{Ab09}, \cite{GMT}.

For the purpose of our analysis, we will assume that $\eta, \kappa \in C^2(\R)$ and that for some $\alpha,\beta>0$, \begin{align}\label{boundseta}\alpha \leq \eta(x), \eta'(x), \kappa(x) \leq \beta \quad \forall x \in \R.\end{align}
Next we will introduce the functional framework required for our analysis.\\
Consider the space of solenoidal vector fields $\mathcal{V} =\{u\in {C^\infty_0}({\Omega})^d, \text{div }u=0\}$. The closures of $\mathcal{V}$ in $L^2(\Omega)^d$ and $H^1_0(\Omega)^d$ are called $H$ and $V$ respectively, and they are characterized as follows (see \cite{T_NSE}): 
  $$H=\{ u \in L^2({\Omega})^d, \text{ div }u=0, u \cdot {\bf n} =0 \text{ on } \partial {\Omega}\}$$ and
   $$V=\{u \in H^1_0({\Omega})^d, \text{ div } u =0\}.$$
We denote by $D(A) \subset V$ the domain of the Stokes operator $A:=-\mathbb{P}\Delta$, where $\mathbb{P}$ is the Helmholtz-Leray orthogonal projection from $L^2(\Omega)^d$ onto $H$; we know that $D(A)=H^2(\Omega)^d\cap V$.

We will equip the space $L^2(\Omega)$ with norm $|\cdot|$ and inner-product $(\cdot,\cdot)$; same notation for the spaces $L^2(\Omega)^d$ and $L^2(\Omega)^{d\times d}$. For $\Xi=[\xi_{ij}] \in L^2(\Omega)^{d \times d}$ where $1\leq i,j \leq d$ the norm is $|\Xi|=\left(\sum_{ij}\int_\Omega \xi_{ij}^2\ud x\right)^{\frac12}$ which is also equal to $\left(\int_\Omega \tr(\Xi\Xi^T)\ud x\right)^{\frac12}$. The associated inner product is given by $(\Xi_1,\Xi_2)=\int_\Omega \tr(\Xi_1\Xi_2^T)\ud x$ for any $\Xi_1,\Xi_2 \in L^2(\Omega)^{d \times d}$.

Our main result concerning the local well-posedness of the system \eqref{pde}-\eqref{bc} is stated in the following theorem.
\begin{thm}\label{main}
	For  $d=2,3$, let $\Omega \subset \R^d$ be a bounded open set with a sufficiently smooth boundary. We are given $u_0 \in D(A), \phi_0 \in H^3(\Omega)$ such that $\partial_{\bf n}\phi_0=0$ and $F_0 \in H^2(\Omega)^{d\times d}$. Then there exists $0<T_0\leq T$ such that \eqref{pde}-\eqref{bc} has a unique solution $(u,F,\phi, p)$ on $[0,T_0]$ such that 
	\begin{equation}\begin{split} \label{strong}
	&u\in C([0,T_0];D(A)) \cap L^2(0,T_0;H^3(\Omega)^d) ,\quad \partial_t u \in C([0,T_0];H) \cap L^2(0,T_0;V),\\
	&F\in C([0,T_0];H^1(\Omega)^{d\times d}) \cap L^\infty(0,T_0;H^2(\Omega)^{d\times d}),\\
	&\phi \in C([0,T_0];H^3(\Omega)) \cap L^2(0,T_0;H^4(\Omega)) \cap H^1(0,T_0;H^3(\Omega)),\\
	&\nabla p \in L^2(0, T_0 ; H^1(\Omega)).
	\end{split}\end{equation}
\end{thm}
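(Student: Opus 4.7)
The plan is a Faedo--Galerkin scheme paired with a hierarchy of a priori estimates and a final compactness argument. I would choose the Hilbert basis $\{w_j\}$ of eigenfunctions of the Stokes operator $A$ (orthonormal in $H$, orthogonal in $V$ and $D(A)$) and the basis $\{\psi_j\}$ of eigenfunctions of the Neumann Laplacian on $\Omega$. For each $n$, I project the momentum equation onto $V_n=\text{span}\{w_1,\dots,w_n\}$ and the Cahn--Hilliard pair onto $\Psi_n=\text{span}\{\psi_1,\dots,\psi_n\}$. Since $u_n$ is finite-dimensional, hence smooth in space, the transport equation for $F_n$ can be solved directly by a fixed-point argument (or along the flow generated by $u_n$) with $F_n$ inheriting as much spatial regularity as $F_0$ permits. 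Substituting $F_n$ into the momentum equation yields a closed ODE system for the Galerkin coefficients to which Cauchy--Lipschitz gives a local solution.

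The main effort lies in deriving estimates that are uniform in $n$ on a common interval $[0,T_0]$. I would proceed by a hierarchy. The basic energy estimate follows by testing the momentum equation with $u_n$, the $\mu$-equation with $\partial_t\phi_n$, the $\phi$-equation with $\mu_n$, and the $F$-equation against $\lambda_e(1-\phi_n)F_n$; the capillary stress $\lambda\nabla\cdot(\nabla\phi\otimes\nabla\phi)$ and elastic stress $\nabla\cdot(\lambda_e(1-\phi)(FF^T-I))$ cancel with the advective terms $u\cdot\nabla\phi$ and with contributions coming from $\nabla u F$, yielding dissipation $\int\eta(\phi_n)|\nabla u_n|^2+\tau|\nabla\mu_n|^2$. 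Higher regularity then follows by: testing the momentum equation with $Au_n$ to control $u_n\in L^\infty(0,T_0;D(A))\cap L^2(0,T_0;H^3)$; time-differentiating and testing with $\partial_t u_n$ (the data $\partial_t u_n(0)$ being read off from the PDE at $t=0$) to obtain $\partial_t u_n\in L^\infty H\cap L^2 V$; applying derivatives to the Cahn--Hilliard pair and using elliptic regularity to push $\phi_n$ up to $H^3$ via $\mu_n\in L^2 H^1$; and estimating $F_n$ in $H^1$ then $H^2$ by differentiating the transport equation and invoking Gronwall with $\|\nabla u_n\|_{L^1_tL^\infty_x}$, controlled through the Sobolev embedding $H^2\hookrightarrow L^\infty$ valid for $d\leq 3$.

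The hard part is closing this hierarchy, because $\phi$, $F$ and $u$ are coupled at the top order. Indeed, $\phi_n\in L^\infty_t H^3$ via the $\mu$-equation requires $\tr(F_nF_n^T)\in L^\infty_t H^1$ and therefore $F_n\in L^\infty_t H^2$; controlling $F_n$ in $H^2$ through the transport equation requires $u_n\in L^1_t W^{1,\infty}$, hence $u_n\in L^2_t H^3$; and the $Au_n$-test produces
\[
\int\nabla\bigl(\lambda_e(1-\phi_n)(F_nF_n^T-I)\bigr):\nabla Au_n,
\]
which again demands $\phi_n\in H^3$ and $F_n\in H^2$. All three estimates must therefore be closed simultaneously by one Gronwall--type argument, using the algebra property of $H^2(\Omega)$, the bounds \eqref{boundseta}, and Young's inequality to absorb top-order terms into $\int\eta(\phi_n)|\nabla Au_n|^2$ and $\tau\int|\nabla\mu_n|^2$. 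The Darcy-like term $\eta(\phi)(1-\phi)u/\kappa(\phi)$ is of lower order once the chain rule is handled, while the polynomial structure of $f$ makes $f'(\phi_n)$ straightforward to bound in $H^3$.

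With uniform bounds in hand on $[0,T_0]$, I would pass to the limit by weak and weak-$\ast$ compactness, using the Aubin--Lions lemma to obtain strong convergence of $u_n$, $\phi_n$, $F_n$ in spaces sufficient to identify each nonlinear term. The pressure is recovered from the limit momentum equation via de Rham's theorem, and the regularity $\nabla p\in L^2(0,T_0;H^1)$ follows from elliptic regularity applied to the Poisson problem obtained by taking the divergence of \eqref{pde}$_1$, since all other terms lie in $L^2 H^1$. Uniqueness is then proved by writing the system satisfied by the difference of two solutions, testing in the natural energy norms, and applying Gronwall's inequality, with the $D(A)\cap L^2 H^3$ regularity of $u$ and the $L^\infty H^3$ regularity of $\phi$ providing exactly the Lipschitz-type control needed to dominate the nonlinear differences.
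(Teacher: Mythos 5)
Your overall architecture (Galerkin on a Stokes basis and a Neumann-Laplacian basis, a hierarchy of higher-order estimates closed simultaneously by Gronwall, Aubin--Lions, de Rham for the pressure, difference estimate for uniqueness) is the same as the paper's. Two substantive gaps in the middle of the argument, however, would need to be filled before the estimates close.

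First, and most importantly, you treat the Cahn--Hilliard regularity as standard, ``applying derivatives to the Cahn--Hilliard pair and using elliptic regularity to push $\phi_n$ up.'' But in this model the usual Cahn--Hilliard identity $\partial_{\bf n}\Delta\phi=0$ on $\partial\Omega$ \emph{fails}: from \eqref{pde}$_5$ and $\partial_{\bf n}\mu=\partial_{\bf n}\phi=0$ one gets instead
\[
\partial_{\bf n}\Delta\phi = -\frac{\lambda_e}{2\lambda}\,\partial_{\bf n}\mathrm{tr}(FF^T)\quad\text{on }\partial\Omega,
\]
so $\Delta\phi$ carries an inhomogeneous Neumann condition driven by $F$. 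Consequently the naive estimate $|\phi|_{H^4}\lesssim |\Delta^2\phi|+|\langle\phi\rangle_\Omega|$ is not available; one needs a biharmonic elliptic regularity estimate for the inhomogeneous Neumann problem (Lemma~\ref{lem_bi}), together with a trace bound $|\partial_{\bf n}\Delta\phi|_{\bH^{1/2}(\partial\Omega)}\lesssim |F|_2^2$, to trade $|\phi|_4$ for $|\Delta^2\phi| + |F|_2^2$. This boundary coupling between $\phi$ and $F$ is what makes the closure here genuinely nonstandard, and your plan as written would stall exactly where you try to estimate $|\phi_n|_{H^4}$ from $\Delta^2\phi_n$.

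Second, testing the momentum equation with $Au_n$ gives $u_n\in L^\infty_t V\cap L^2_t D(A)$, not, as you write, $L^\infty_t D(A)\cap L^2_t H^3$. The $L^\infty_t D(A)$ bound comes a posteriori from $u_t\in L^\infty_t H$ together with elliptic regularity of the steady Stokes problem at each fixed $t$; the $L^2_t H^3$ bound requires a separate variable-viscosity Stokes regularity estimate of the form
\[
|u|_3+\Big|\tfrac{p}{\eta(\phi)}\Big|_1\lesssim |f|_1\Big[1+(1+|\phi|_2^2)\big(|\phi|_2^2+|\phi|_2^{1/2}|\phi|_3^{1/2}\big)\Big],
\]
(Lemma~\ref{Stokes}), which must be proved by rewriting $-\nabla\cdot(\eta(\phi)\nabla u)+\nabla p=f$ as a constant-coefficient Stokes system with commutator terms. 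This is precisely the mechanism that supplies the $|u|_3$ you correctly identify as needed for the $H^2$ transport estimate on $F$; absorbing into $\int\eta(\phi_n)|\nabla Au_n|^2$ is not available because no test function in the scheme produces such a dissipative term. Apart from these two points (and the cosmetic choice of solving the $F$-equation by characteristics rather than Galerkin projection, which is harmless), your sketch matches the paper's strategy.
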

\section{A priori estimates} In this section we perform standard energy estimates on the system \eqref{pde} in order to apply the Faedo-Galerkin scheme in Section \ref{galerkin}. We will only consider the case $d=3$; however the same results can be obtained more easily for the case when $d=2$. Hereupon, $C$ will represent a generic positive constant depending on $\Omega$, $\alpha$ and $\beta$, unless specified otherwise. We will use the short hand $|.|_s$ to denote $|.|_{H^s(\Omega)}$ or $|.|_{H^s(\Omega)^d}$ or $|.|_{H^s(\Omega)^{d\times d}}$ depending on the argument, where $H^s(\Omega)$, $s \in \R_{+}$ is the Sobolev space of order $s$; see e.g. \cite{LM72}. We will also use boldface letters to denote spaces containing vector-valued functions. 

Observe that, unlike the Cahn-Hilliard equations or the usual model-H, for our model we do not have that $\partial_{\bf n}\Delta\phi = 0$ on the boundary $\partial\Omega$ due to the form of \eqref{pde}$_5$  and \eqref{bc}. Instead we have $\partial_{\bf n}\Delta \phi=-\frac{\lambda_e}{2}\partial_{\bf n}\tr(FF^T)$; see below. Hence we will frequently use the following consequence of the {regularity results of the Agmon-Douglis-Nirenberg type \cite{ADN}}.
\begin{lem}\label{lem_bi}
Let $\Omega \subset \R^d$ be a bounded open set with a sufficiently smooth boundary. Assume that for $f \in L^2(\Omega)$ and $g \in H^{\frac12}(\partial\Omega)$, $\phi \in H^2(\Omega)$ are solutions, in a weak sense, of the following biharmonic inhomogeneous Neumann boundary value problem,
	\begin{equation}\begin{split}\label{biprob}
	\Delta^2\phi &=f \quad \text{ in } \Omega,\\
	\partial_{\bf n} \phi&=0,\,\, \partial_{\bf n}\Delta \phi =g \quad \text{ on }\partial\Omega,
	\end{split}	\end{equation}
	where $f$ and $g$ satisfy the following compatibility condition:
	$$\int_\Omega f\ud x = \int_{\partial \Omega} g \ud \Gamma.$$
	Then $\phi \in H^4(\Omega)$ and there exists some constant $\tilde{C}>0$ independent of $\phi,f,g$ such that
	\begin{align}\label{biresult}
	|\phi|^2_{H^{4}(\Omega)/\R} \leq \tilde{C} \left(|f|^2 + |g|^2_{H^{\frac12}(\partial\Omega)} \right), \text{ or}\\
	|\phi|^2_{H^{4}(\Omega)} \leq \tilde{C} \left(|f|^2 + |g|^2_{H^{\frac12}(\partial\Omega)} + |\int_\Omega\phi\ud x|^2\right).
	\end{align}
\end{lem}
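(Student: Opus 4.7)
The plan is to reduce the fourth-order inhomogeneous Neumann problem to two successive second-order Neumann problems for the Laplacian, by introducing the auxiliary function $\psi := \Delta \phi$, and then to invoke the classical ADN elliptic regularity at each stage.

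As a preliminary observation, integrating $\Delta\phi$ over $\Omega$ and using $\partial_{\bf n}\phi = 0$ gives $\int_\Omega \Delta\phi\,\ud x = 0$; also, testing $\Delta^2\phi = f$ against $1$ and using $\partial_{\bf n}\Delta\phi = g$ recovers the assumed compatibility condition $\int_\Omega f\,\ud x = \int_{\partial\Omega} g\,\ud\Gamma$, so both Neumann problems below are solvable. First, I would consider
\begin{equation*}
\Delta \psi = f \text{ in } \Omega, \qquad \partial_{\bf n} \psi = g \text{ on } \partial\Omega,
\end{equation*}
whose solvability is guaranteed by the compatibility condition. Standard ADN Neumann regularity for the Laplacian, with $f \in L^2(\Omega)$ and $g \in H^{1/2}(\partial\Omega)$, yields a solution (unique up to an additive constant) satisfying
\begin{equation*}
|\psi|^2_{H^2(\Omega)/\R} \leq C\bigl(|f|^2 + |g|^2_{H^{1/2}(\partial\Omega)}\bigr).
\end{equation*}
By uniqueness (modulo constants) and the fact that $\Delta\phi$ solves the same Neumann problem with zero mean, we identify $\psi$ with the representative of $\Delta\phi$ having $\int_\Omega \psi\,\ud x = 0$.

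Next, I would solve
\begin{equation*}
\Delta \phi = \psi \text{ in } \Omega, \qquad \partial_{\bf n}\phi = 0 \text{ on } \partial\Omega,
\end{equation*}
whose compatibility condition $\int_\Omega \psi\,\ud x = 0$ is now automatic. Applying ADN once more, this time with right-hand side in $H^2(\Omega)$ and vanishing Neumann data, gives $\phi \in H^4(\Omega)$ together with
\begin{equation*}
|\phi|^2_{H^4(\Omega)/\R} \leq C |\psi|^2_{H^2(\Omega)} \leq \tilde{C}\bigl(|f|^2 + |g|^2_{H^{1/2}(\partial\Omega)}\bigr),
\end{equation*}
which is the first inequality of \eqref{biresult}. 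The full $H^4$ bound follows by restoring the mean: setting $c = |\Omega|^{-1}\int_\Omega \phi\,\ud x$, one has $|\phi|^2_{H^4(\Omega)} \leq 2|\phi - c|^2_{H^4(\Omega)} + 2|c|^2|\Omega|$, and since $|\phi-c|_{H^4} $ is equivalent to $|\phi|_{H^4/\R}$ and $|c|^2|\Omega| = |\Omega|^{-1}|\int_\Omega \phi\,\ud x|^2$, the second inequality follows after adjusting the constant.

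The main point requiring care is the verification that the ADN complementing (Lopatinskii) conditions are met for the Neumann Laplacian on a smooth domain at each step, and that the regularity gain is exactly two derivatives per step — so that starting from $f \in L^2$ and $g \in H^{1/2}$ we first reach $\psi \in H^2$, and then $\phi \in H^4$. Both facts are standard on sufficiently smooth domains and appear explicitly in \cite{ADN}, so no additional work beyond this two-step reduction is needed.
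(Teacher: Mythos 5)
Your proof is correct in substance but takes a genuinely different route from the paper. The paper applies the Agmon--Douglis--Nirenberg regularity theory \emph{directly} to the fourth-order biharmonic operator with the mixed Neumann boundary operators $(\partial_{\bf n}\cdot,\ \partial_{\bf n}\Delta\cdot)$: Lax--Milgram furnishes the $H^2$ weak solution unique modulo constants, and a single application of ADN to the full system then upgrades it to $H^4$ with the stated estimate. You instead factor $\Delta^2 = \Delta\circ\Delta$ and solve two second-order Neumann problems for the Laplacian in sequence, applying ADN (really just classical Schauder/$L^2$ Neumann regularity) twice. Your route is more elementary in that it only invokes second-order elliptic regularity, which is more widely familiar than verifying the complementing condition for a fourth-order operator with a non-standard pair of boundary operators; the price is a bookkeeping step the paper avoids, namely showing that the function produced by the two-step construction coincides (modulo a constant) with the \emph{given} weak solution $\phi$. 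You gesture at this by asserting that $\Delta\phi$ "solves the same Neumann problem with zero mean," but for $\phi$ merely in $H^2$, $\Delta\phi$ only lies in $L^2$, so it solves that problem only in a very weak (transposed) sense, which deserves a sentence of justification. The cleanest fix is to argue forward rather than backward: your two-step construction produces an $H^4$ function $\tilde\phi$ with $\partial_{\bf n}\tilde\phi = 0$; one integration by parts shows $\tilde\phi$ satisfies the weak biharmonic formulation $\int_\Omega\Delta\tilde\phi\,\Delta\chi = \int_\Omega f\chi - \int_{\partial\Omega} g\chi$ for all test $\chi\in H^2$ with $\partial_{\bf n}\chi=0$; by the Lax--Milgram uniqueness (modulo constants) of that formulation, $\phi = \tilde\phi + c$, hence $\phi\in H^4$ and inherits the estimate. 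With that closing argument made explicit, your proof is a valid and arguably more self-contained alternative to the paper's.
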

\begin{proof}
	The Lax-Milgram theorem shows that $\phi \in H^2(\Omega)$, a weak solution of \eqref{biprob}, exists and is unique up to an additive constant. Then $\phi \in H^4(\Omega)$ follows from the results of Agmon-Douglis-Nirenberg \cite{ADN} and {the first inequality} \eqref{biresult} holds. The additive constant is accounted for by the term $|\int_\Omega\phi\ud x|$.
\end{proof}

{Similar to \eqref{biresult} we have that if $\phi \in H^{2}(\Omega)$ is such that $\partial_{\bf n}\phi=0$ on $\partial\Omega$, then, for some constant $C>0$,}
\begin{align}\label{equivnorm}|\phi|_{2} \leq C\left(|\Delta \phi| + |\int_\Omega \phi \ud x|\right).
\end{align}
For brevity, we will use the following notation for the mean value of a function $\phi$, 
$$\langle \phi \rangle_{\Omega} :=\frac1{|\Omega|}\int_\Omega \phi \ud x.$$
Observe that by integrating equation \eqref{pde}$_4$ over $\Omega$ and using the divergence free property of $u$ along with the boundary conditions \eqref{bc} we have 
\begin{align}\label{avs}\langle\phi_t\rangle_\Omega=\langle\Delta \phi\rangle_\Omega=\langle\Delta \phi_t\rangle_\Omega= 0.\end{align}
This further implies that for any time $t>0$, \begin{align}\label{K0}|\langle \phi(t) \rangle_\Omega|=|\langle \phi_0 \rangle_\Omega|=:K_0. \end{align}

	We will also use the following lemma to obtain estimates on $u$ in the space norm $\bH^3(\Omega)$ later in this section. See \cite{CK04} for a similar result in the context of the Navier-Stokes equations with density-dependent viscosity.
	\begin{lem}\label{Stokes}
	Let $\Omega \subset \R^3$ be an open bounded set  with a sufficiently smooth boundary. Assume that  $ (u, p ) \in V\times L^2(\Omega)$ is the weak solution of the following problem
	\begin{equation}\begin{split}\label{stokeseqn}
	-\text{ div }( \eta(\phi)\nabla u) + \nabla p&=f, \quad {\text {in } } \Omega,\\
	\text{ div }u&=0\quad \text{in }\Omega,\\
	\int_\Omega \frac{p}{\eta(\phi)} \ud x&=0,
\end{split}	\end{equation}
where $\eta \in C^2(\R)$ satisfies the assumption \eqref{boundseta} and $\phi \in H^3(\Omega)$. Then there exists a constant $C>0$ such that
	\begin{align}\label{stokesresult}	|u|_3 + \left|\frac{p}{\eta(\phi)}\right|_1 \leq C|f|_1  \left[ 1+ \left( 1+ | \phi|_{2}^2\right) \left( |\phi|^2_2+ |\phi|_2^{\frac12}|\phi|_3^{\frac12}\right)  \right] .
	\end{align}	\end{lem}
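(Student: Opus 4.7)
The plan is to recast \eqref{stokeseqn} as a standard constant-coefficient Stokes problem and then bootstrap elliptic regularity. Setting $q:=p/\eta(\phi)$, so that the normalization becomes $\int_\Omega q\,\ud x=0$, expanding $\nabla p=\eta'(\phi)\,q\,\nabla\phi+\eta(\phi)\nabla q$ and $-\text{div}(\eta(\phi)\nabla u)=-\eta(\phi)\Delta u-\eta'(\phi)\nabla\phi\cdot\nabla u$, and dividing \eqref{stokeseqn}$_1$ by $\eta(\phi)$, one arrives at
\begin{equation*}
-\Delta u+\nabla q=g,\qquad g:=\frac{f}{\eta(\phi)}+\frac{\eta'(\phi)}{\eta(\phi)}\nabla\phi\cdot\nabla u-\frac{\eta'(\phi)}{\eta(\phi)}\,q\,\nabla\phi,
\end{equation*}
coupled with $\text{div}\,u=0$ in $\Omega$, $u=0$ on $\partial\Omega$, and $\int_\Omega q\,\ud x=0$. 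The standard Stokes elliptic regularity $|u|_{s+2}+|q|_{s+1}\leq C|g|_s$ then applies for $s=0,1$, and the task reduces to estimating $|g|_s$ in terms of $|f|_1, |\phi|_2, |\phi|_3$.

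I would bootstrap in three levels. At the base level, testing \eqref{stokeseqn}$_1$ against $u$ together with the inf-sup recovery of the pressure and the bounds \eqref{boundseta} gives $|u|_1+|q|_0\leq C|f|_0$. At the intermediate level ($s=0$), I would control $|(\eta'(\phi)/\eta(\phi))\nabla\phi\cdot\nabla u|_0\leq C|\nabla\phi|_{L^6}|\nabla u|_{L^3}\leq C|\phi|_2|u|_1^{1/2}|u|_2^{1/2}$ and $|(\eta'(\phi)/\eta(\phi))\,q\,\nabla\phi|_0\leq C|q|_0^{1/2}|q|_1^{1/2}|\phi|_2$ via Hölder, Sobolev embedding $H^1\hookrightarrow L^6$, and Gagliardo--Nirenberg interpolation; Young's inequality then absorbs $|u|_2$ and $|q|_1$ into the left-hand side, producing the intermediate bound $|u|_2+|q|_1\leq C|f|_0(1+|\phi|_2^2)$.

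The main obstacle is the top level ($s=1$), where one applies $|u|_3+|q|_2\leq C|g|_1$ and must estimate $|g|_1$ in terms of $|f|_1,|\phi|_2,|\phi|_3$ only, recycling the intermediate bound for the lower-order norms of $u$ and $q$. The essential ingredients are the three-dimensional interpolation inequalities $|\nabla\phi|_{L^\infty}\leq C|\phi|_2^{1/2}|\phi|_3^{1/2}$ (Agmon) and $|\nabla^2\phi|_{L^3}\leq C|\phi|_2^{1/2}|\phi|_3^{1/2}$ (Gagliardo--Nirenberg). Expanding $\nabla g$ by the product rule: the linear term $\nabla(f/\eta(\phi))$ contributes $C|f|_1(1+|\phi|_2)$; the derivative of $(\eta'(\phi)/\eta(\phi))\nabla\phi\cdot\nabla u$ produces summands of the form $(\nabla\phi)^2\nabla u$, $\nabla^2\phi\cdot\nabla u$, and $\nabla\phi\cdot\nabla^2 u$, each bounded by $C(|\phi|_2^2+|\phi|_2^{1/2}|\phi|_3^{1/2})|u|_2$; the analogous expansion of $q(\eta'(\phi)/\eta(\phi))\nabla\phi$ produces terms dominated by $C(|\phi|_2^2+|\phi|_2^{1/2}|\phi|_3^{1/2})|q|_1$. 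The crucial choice is to pair $\nabla\phi$ in $L^\infty$ with $\nabla^2 u,\nabla q$ in $L^2$, rather than using an $L^6$--$L^3$ split, so that no factor of $|u|_3$ or $|q|_2$ ever appears on the right and no further absorption is required. Substituting the intermediate bound $|u|_2+|q|_1\leq C|f|_1(1+|\phi|_2^2)$ then assembles the pieces into precisely the factored form of \eqref{stokesresult}.
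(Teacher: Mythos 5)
Your proposal follows the paper's strategy closely: set $q=p/\eta(\phi)$, divide by $\eta(\phi)$ to obtain a constant-coefficient Stokes problem with lower-order commutator terms in the source $g$, and bootstrap the Cattabriga regularity through levels $s=0$ and $s=1$, with the base-level $L^2$ pressure bound obtained via a divergence lift (your ``inf-sup recovery'' is exactly the construction the paper carries out with the auxiliary field $\xi$ satisfying $\mathrm{div}\,\xi=q$). The one technical difference is at the top level $s=1$: the paper pairs the highest-order factors $\nabla^2 u$ and $\nabla q$ in $L^3$ against $\nabla\phi$ in $L^6$, which produces $|u|_2^{1/2}|u|_3^{1/2}$ and $|\nabla q|^{1/2}|\nabla q|_1^{1/2}$ on the right-hand side and therefore requires a further Young/Schwarz absorption into the left; you instead pair $\nabla^2 u$ and $\nabla q$ in $L^2$ against $\nabla\phi$ in $L^\infty$ via Agmon's inequality $|\nabla\phi|_{L^\infty}\lesssim|\phi|_2^{1/2}|\phi|_3^{1/2}$, so that $|u|_3$ and $|q|_2$ never appear on the right and no top-level absorption is needed. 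Both pairings lead to the same factored bound \eqref{stokesresult} once the intermediate estimate $|u|_2+|q|_1\leq C(1+|\phi|_2^2)|f|$ is substituted; your variant is a slightly cleaner finish, but structurally the two arguments are the same.
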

	\begin{proof}[Proof of Lemma \ref{Stokes}]
	Firstly, we know that the divergence operator maps the space $\bH^1_0(\Omega)$ onto the space ${{L}^2(\Omega)/ \R}:=\{g \in L^2(\Omega); \int_\Omega g \ud x=0\}$ (see Lemma 2.4 in \cite{T_NSE}). Hence we have the existence of $ \xi \in \bH^1_0(\Omega)$ such that,
		$$\text{div }\xi= \frac{p}{\eta(\phi)},$$
		where $\xi$, which is not unique, can be chosen such that 
		\begin{align}| \xi|_{1} \leq C\left|\frac{p}{\eta(\phi)}\right|.\end{align}
	For a proof of the above result see e.g. \cite{Galdi} (III.3.2-III.3.3).	\\
	Testing \eqref{stokeseqn} with $\xi$ and using the standard estimate $|u|_1 \leq \alpha^{-1}|f|$ we obtain,
		\begin{equation}\begin{split}\left|\frac{p}{\sqrt{\eta(\phi)}}\right|^2 =(\eta(\phi)\nabla u, \nabla \xi) - (f,\xi) &\lesssim \beta|u|_1|\xi|_1 +|f||\xi|_1 \\
		&\lesssim (1+ \beta\alpha^{-1})|f||\frac{p}{\eta(\phi)}|.\label{p1}
		\end{split} \end{equation}
		 Additionally observe that,
		 \begin{align}\alpha  \left|\frac{p}{{\eta(\phi)}}\right|^2 \leq \left|\frac{p}{\sqrt{\eta(\phi)}}\right|^2.\label{p2} \end{align}
		 Combining \eqref{p1} and \eqref{p2} we obtain,		 
		 \begin{align}\label{pl2}\left|\frac{p}{{\eta(\phi)}}\right| \leq C(\alpha,\beta) |f|.
		 \end{align}
		 Now observe that \eqref{stokeseqn}$_1$ can be re-written as,
		\begin{equation}
		\begin{split}
		-\Delta u + \nabla(\frac{p}{\eta(\phi)})= \frac{f}{\eta(\phi)} + \frac{\nabla u\cdot \nabla \eta(\phi)}{\eta(\phi)}- \frac{p\nabla \eta(\phi)}{\eta(\phi)^2}.
		\end{split}
		\end{equation}
		Applying the classical result on the regularity of the Stokes problem (see \cite{Ca61}) we have,
		\begin{align*}
			|u|_2 + \left|\nabla \frac{p}{\eta(\phi)}\right|& \lesssim \left|\frac{f}{\eta(\phi)}\right| + \left|\frac{\nabla u\cdot \nabla \eta(\phi)}{\eta(\phi)}\right|+ \left|\frac{p\nabla \eta(\phi)}{\eta(\phi)^2}\right|.
		\end{align*}
		We treat each term on the right hand side of the above inequality using the H\"older inequality and the Sobolev embeddings $H^{\frac12}(\Omega) \subset L^3(\Omega)$ and $H^1(\Omega)\subset L^6(\Omega)$ in the following way:
		\begin{align*}
		\left|\frac{\nabla u\cdot \nabla \eta(\phi)}{\eta(\phi)}\right|=\left|\frac{\eta'(\phi)\nabla u\cdot \nabla \phi}{\eta(\phi)}\right| &\lesssim \alpha^{-1}|\eta'|_{L^\infty}|\nabla u|_{\bL^3(\Omega)}|\nabla \phi|_{\bL^6(\Omega)}\\
		& \lesssim \alpha^{-1}\beta|u|_1^{\frac12}|u|^{\frac12}_2| \nabla \phi|_{\bL^6(\Omega)}
		\shortintertext{and,}
		\left|\frac{p\nabla \eta(\phi)}{\eta(\phi)^2}\right|=\left|\frac{p\,\eta'(\phi)\nabla \phi}{\eta(\phi)^2}\right| &\lesssim \alpha^{-1}|\eta'|_{L^\infty} \left|\frac{p}{\eta(\phi)}\right|_{L^3}|\nabla \phi|_{\bL^6(\Omega)}\\
		& \lesssim \alpha^{-1}\beta\left|\frac{p}{\eta(\phi)}\right|^{\frac12}\left|\nabla\frac{p}{\eta(\phi)}\right|^{\frac12}|\nabla \phi|_{\bL^6(\Omega)}.
		\end{align*}
		Hence using the above bounds and the Schwarz inequality we obtain, for some $C>0$ depending on $\alpha,\beta$, that
			\begin{align*}
		|u|_2 + \left|\nabla \frac{p}{\eta(\phi)}\right|
		& \lesssim \alpha^{-1}|f| + \alpha^{-1}\beta|u|_1^{\frac12}|u|^{\frac12}_2| \nabla \phi|_{\bL^6(\Omega)} + \alpha^{-1}\beta\left|\frac{p}{\eta(\phi)}\right|^{\frac12}\left|\nabla\frac{p}{\eta(\phi)}\right|^{\frac12}|\nabla \phi|_{\bL^6(\Omega)}\\
		&  \leq \frac12|u|_2 + \frac12 \left|\nabla\frac{p}{\eta(\phi)}\right| +\alpha^{-1}|f| + C|u|_1| \nabla \phi|^2_{\bL^6(\Omega)} + C\left|\frac{p}{\eta(\phi)}\right||\nabla \phi|^2_{\bL^6(\Omega)}.
		\end{align*}
	We now use \eqref{pl2} to obtain,
			\begin{equation}
		\begin{split}\label{stokesu2}
		|u|_2 + \left|\nabla \frac{p}{\eta(\phi)}\right|
		& \leq C(\alpha,\beta) \left( 1+ |\nabla \phi|_{\bL^6(\Omega)}^2\right) |f|.
		\end{split}
		\end{equation}
		Similarly using the classical regularity results for the Stokes problem (see \cite{Ca61}) at the next order, we write
		\begin{align*}
		|u|_3 + \left|\nabla \frac{p}{\eta(\phi)}\right|_1 &\lesssim \left|\frac{f}{\eta(\phi)}\right|_1 + \left|\frac{\eta'(\phi)\nabla u\cdot  \nabla \phi}{\eta(\phi)}\right|_1+ \left|\frac{\eta'(\phi)p\nabla \phi}{\eta(\phi)^2}\right|_1
			\\& \lesssim \alpha^{-1}|f|_1 + \alpha^{-1}\beta\sum_{|s|\leq 1}\left( |\partial^s \nabla u|_{\bL^3(\Omega)}|\nabla \phi|_{\bL^6(\Omega)} + |\nabla u|_{\bL^6(\Omega)}|\partial^s\nabla \phi|_{\bL^3(\Omega)}\right) \\
		&\qquad+ \alpha^{-1}\beta\sum_{|s|\leq 1} \left(\left|\nabla \frac{p}{\eta(\phi)}\right|_{\bL^3(\Omega)}|\nabla \phi|_{\bL^6(\Omega)} + \left| \frac{p}{\eta(\phi)}\right|_{L^6(\Omega)}|\partial^s \nabla \phi|_{\bL^3(\Omega)} \right)\\
	& \lesssim  \alpha^{-1}|f|_1 + \alpha^{-1}\beta\left( |  u|_{2}^{\frac12}|u|_3^{\frac12}| \phi|_{2} + |u|_2^{}| \phi|_{2}^{\frac12}|\phi|_3^{\frac12}\right) \\
		&\qquad+ \alpha^{-1}\beta\left(\left|\nabla \frac{p}{\eta(\phi)}\right|^{\frac12}\left|\nabla \frac{p}{\eta(\phi)}\right|_{1}^{\frac12}| \phi|_{2} + \left|\nabla \frac{p}{\eta(\phi)}\right|^{}|  \phi|^{\frac12}_{2}|\phi|_3^{\frac12} \right).\end{align*}Using the Schwarz inequality we can find further upper bounds as follows, \begin{align*}
			& \leq \frac12|u|_3 + \frac12\left|\nabla \frac{p}{\eta(\phi)}\right|_{1}^{}+ \alpha^{-1}|f|_1 + C |u|_{2}| \phi|^2_{2} + |u|_2| \phi|_{2}^{\frac12}|\phi|_3^{\frac12} \\
		&\qquad+ C\left(\left|\nabla \frac{p}{\eta(\phi)}\right|^{}| \phi|^2_{2} + \left|\nabla \frac{p}{\eta(\phi)}\right|^{}|  \phi|^{\frac12}_{2}|\phi|_3^{\frac12} \right). 
		\end{align*}
That is we have,	
	\begin{equation}		\begin{split}
		|u|_3 + \left|\nabla \frac{p}{\eta(\phi)}\right|_1 \leq C \left( |f|_1 + \left( |u|_2+\left|\nabla \frac{p}{\eta(\phi)}\right|\right) (|\phi|^2_2 + |\phi|_2^{\frac12}|\phi|_3^{\frac12})   \right). \label{stokesu3_1}
		\end{split}
		\end{equation}	
	Substituting \eqref{stokesu2} in \eqref{stokesu3_1} we obtain the desired inequality,
			\begin{equation}
		\begin{split}\label{stokesu3}
		|u|_3 + \left|\nabla \frac{p}{\eta(\phi)}\right|_1 &\leq C|f|_1  \left[ 1+ \left( 1+ |\nabla \phi|_{\bL^6(\Omega)}^2\right) \left( |\phi|^2_2+ |\phi|_2^{\frac12}|\phi|_3^{\frac12}\right)  \right].
		\end{split}
		\end{equation}
		With these estimates we conclude the proof of Lemma \ref{Stokes}.
	\end{proof}
Now we begin to find appropriate a priori estimates for the system \eqref{pde} by first testing equation \eqref{pde}$_1$ with $u$,
	\begin{align*}
\frac12\frac{\ud}{\ud t}\int_\Omega|u|^2\ud x + &\int_\Omega{\eta(\phi)}|\nabla u|^2\ud x=(\lambda\nabla \cdot (\nabla \phi \otimes \nabla \phi),u) \\
&\quad	\nonumber-\lambda_e(\text{div}((1-\phi)(FF^T-I)),u) - (\frac{\eta(\phi)}{\kappa(\phi)}(1-\phi)u,u) .
\end{align*}
The first term on the right hand side of \eqref{pde}$_1$ can be reformulated as follows,
\begin{align}-\lambda\nabla \cdot (\nabla \phi \otimes \nabla \phi)
&=-\lambda \Delta \phi \nabla\phi -\lambda\nabla (\frac12 |\nabla \phi|^2);\label{mutophi}
\shortintertext{using \eqref{pde}$_5$ further gives,}
&=\mu \nabla \phi + \frac{\lambda_e}{2}\tr(FF^T-I)\nabla \phi-\lambda\gamma\nabla f(\phi) - \lambda\nabla(\frac12|\nabla \phi|^2)\label{tensor}.\end{align}
The gradient terms appearing on the right hand sides of \eqref{mutophi} and \eqref{tensor} will be accounted for as a part of the pressure term as we use these identities interchangeably in the analysis that follows.\\
  Also observe that, for any matrix $\Xi$, an application of integration by parts gives us the following identity:
 $$\int_\Omega \text{div}(\Xi)\, u\ud x=-\int_\Omega \tr(\nabla u \cdot \Xi)\ud x.$$
 Applying this property we see that,
 \begin{align*}\int_\Omega \lambda_e\text{div}((1-\phi)(FF^T-I))u \, \ud x &= -\lambda_e\int_\Omega \tr(\nabla u(FF^T-I)(1-\phi))\ud x\\
  &=-\lambda_e\int_\Omega \tr(\nabla uFF^T)(1-\phi)\ud x + \lambda_e\int_\Omega \tr(\nabla u)(1-\phi)\ud x 
  \shortintertext{since $\tr(\nabla u)=$ div $u=0$, we obtain}
 &=-\lambda_e\int_\Omega \tr(\nabla uFF^T)(1-\phi)\ud x.\end{align*}
	Thus we obtain,
	\begin{align}\label{u_ener}
	\frac12\frac{\ud}{\ud t}\int_\Omega|u|^2\ud x + &\int_\Omega{\eta(\phi)}|\nabla u|^2\ud x=\int_\Omega \mu \nabla \phi \cdot u\ud x +\frac{\lambda_e}{2}\int_\Omega \tr(FF^T)(u \cdot\nabla \phi) \ud x\\
&\quad	\nonumber-\lambda_e\int_\Omega \tr(\nabla uFF^T)(1-\phi)\ud x -\int_\Omega \frac{\eta(\phi)}{\kappa(\phi)}(1-\phi)u^2 \ud x.
	\end{align}
	Next we test \eqref{pde}$_4$ by $\mu$ and obtain
	\begin{align}\label{test_mu}
	\left( \frac{\ud \phi}{\ud t},\mu\right)  + (u \cdot \nabla \phi,\mu) + \tau|\nabla \mu|^2=0.
	\end{align}
We use the expression \eqref{pde}$_5$ for $\mu$ to expand the first term of equation \eqref{test_mu} in the following way,
	\begin{align} \left( \frac{\ud  \phi}{\ud t},\mu \right) &= \int_\Omega
	\left(-\lambda \Delta \phi  + \lambda\gamma f'(\phi)-\frac{\lambda_e}{2}\tr(FF^T-I)\right)\frac{\ud \phi}{ \ud t} \ud x
	 \nonumber\\
	&=\int_\Omega \frac{\ud }{\ud t}\left(\frac{\lambda}2|\nabla \phi|^2 +\lambda \gamma f(\phi)\right)\ud x-\frac{\lambda_e}{2}\left( \tr(FF^T),\frac{\ud \phi}{\ud t}\right) . \label{dphimu}\end{align}
	In the above equation we have also used \eqref{avs} which gives us  $\frac{\lambda_e}{2}\tr(I)\int_\Omega\phi_t\ud x=0$.\\

Combining \eqref{u_ener}, \eqref{test_mu} and \eqref{dphimu} we find
			\begin{equation}\begin{split}\label{u_phi_eqn}
			&\frac12\frac{\ud}{\ud t}\int_\Omega\left(|u|^2+{\lambda}|\nabla \phi|^2 +2\lambda \gamma f(\phi) \right)\ud x+ \int_\Omega({\eta(\phi)}|\nabla u|^2+\tau |\nabla\mu|^2)\ud x\\ &\qquad =\frac{\lambda_e}{2}\int_\Omega \tr(FF^T)\left(\frac{\ud \phi}{\ud t}+u \cdot\nabla \phi\right) \ud x
	-\lambda_e\int_\Omega \tr(\nabla uFF^T)(1-\phi)\ud x \\
	&\qquad -\int_\Omega \frac{\eta(\phi)}{\kappa(\phi)}(1-\phi)u^2 \ud x.
	\end{split}\end{equation}	
 Next we test \eqref{pde}$_3$ with $F(1-\phi)$ i.e. we multiply \eqref{pde}$_3$ by $F^T(1-\phi)$, take the trace and then integrate over $\Omega$.
 \begin{equation}\begin{split}\label{feqn_1}\int_\Omega \tr(\frac{\ud F}{\ud t}F^T)(1-\phi)\ud x+\int_\Omega \tr((u\cdot \nabla F)F^T)(1-\phi)  \ud x
\\ = \int_\Omega\tr(\nabla u FF^T)(1-\phi)\ud x.\end{split}\end{equation}
For the first term in the equation above, we note that
	\begin{align}
	\tr(\frac{\ud F}{\ud t}F^T)= \frac12 \frac{\ud }{\ud t}\tr(FF^T) .\label{matrix_iden1}
	\end{align}
Also observe that,
$$\tr((u\cdot \nabla F)F^T)=\frac12 u\cdot \nabla \tr(FF^T).$$
This further gives us the following identity,
\begin{align}
\int_\Omega\tr((u\cdot \nabla F)F^T)(1-\phi)\ud x&=\frac12 \int_\Omega u\cdot \nabla \tr(FF^T)(1-\phi) \ud x\nonumber\\
&=-\frac12 \int_\Omega u\cdot \nabla \tr(FF^T)\phi +\frac12 \int_\Omega u\cdot \nabla \tr(FF^T). \nonumber\end{align}
Integrating by parts and using that div$u=0$ we see that the second term on the right is $0$; this further gives\begin{align}
\int_\Omega\tr((u\cdot \nabla F)F^T)(1-\phi)\ud x=-\frac12 \int_\Omega u\cdot \nabla \tr(FF^T)\phi.\label{matrix_iden2}
\end{align}
Using \eqref{matrix_iden1} and \eqref{matrix_iden2} in \eqref{feqn_1} we obtain,
	\begin{equation}\begin{split}\label{f_ip}
\frac12\int_\Omega \left(\frac{\ud }{\ud t}\tr(FF^T)\right)(1-\phi)\ud x- \frac12\int_\Omega u\cdot \nabla \tr(FF^T)\phi  \ud x\\
= \int_\Omega\tr(\nabla u FF^T)(1-\phi)\ud x.
\end{split}	\end{equation}
Substituting the right hand side of the equation \eqref{f_ip} in \eqref{u_phi_eqn} we obtain,
\begin{equation*}\begin{split}
\frac12\frac{\ud}{\ud t}\int_\Omega\left(|u|^2+{\lambda}|\nabla \phi|^2 +2\lambda \gamma f(\phi) \right)&\ud x+ \int_\Omega({\eta(\phi)}|\nabla u|^2+\tau |\nabla\mu|^2)\ud x\\ & =\frac{\lambda_e}{2}\int_\Omega \tr(FF^T)\left(\frac{\ud \phi}{\ud t}+u \cdot\nabla \phi\right) \ud x
\\&\hspace{-0.2in}-
\frac{\lambda_e}2\int_\Omega \left(\frac{\ud }{\ud t}\tr(FF^T)\right)(1-\phi)\ud x+\frac{\lambda_e}2\int_\Omega u\cdot \nabla \tr(FF^T)\phi  \ud x \\
& -\int_\Omega \frac{\eta(\phi)}{\kappa(\phi)}(1-\phi)u^2 \ud x.
\end{split}\end{equation*}	
We will now simplify the first three integrals on the right hand side. Observe that,
\begin{align*}
&\frac{\lambda_e}{2}\int_\Omega \tr(FF^T)\left(\frac{\ud \phi}{\ud t}+u \cdot\nabla \phi\right) -\left(\frac{\ud }{\ud t}\tr(FF^T)\right)(1-\phi)+ u\cdot \nabla \tr(FF^T)\phi \, \ud x\end{align*}\begin{align*}
&= -\frac{\lambda_e}{2} \int_\Omega \frac{\ud}{\ud t} (\tr(FF^T)(1-\phi)) \ud x + \frac{\lambda_e}{2} \int_\Omega \tr(FF^T)u\cdot \nabla \phi + u\cdot \nabla \tr(FF^T)\phi\ud x\\
&= -\frac{\lambda_e}{2} \int_\Omega \frac{\ud}{\ud t} (\tr(FF^T)(1-\phi)) \ud x + \frac{\lambda_e}{2} \int_\Omega \text{div}(\tr(FF^T)u\phi) \ud x.
\shortintertext{Using the divergence theorem we see that $\int_\Omega \text{div}(\tr(FF^T)u\phi) \ud x=\int_{\partial \Omega}\tr (FF^T)\phi u\cdot n\ud \Gamma=0$ and we obtain}
&=-\frac{\lambda_e}{2} \int_\Omega \frac{\ud}{\ud t} (\tr(FF^T)(1-\phi)) \ud x.
\end{align*}
Thus we have,
	\begin{align}
	\frac12\frac{\ud}{\ud t}\int_\Omega \Big(|u|^2+{\lambda}|\nabla \phi|^2 +\lambda_e |F|^2 + 2\lambda \gamma f(\phi)\Big)\ud x
	&+ \int_\Omega({\eta(\phi)}|\nabla u|^2+\tau |\nabla\mu|^2+\frac{\eta(\phi)}{\kappa(\phi)}u^2)\ud x \nonumber\\ 
	&=\frac{\lambda_e}{2} \frac{\ud }{\ud t}\int_\Omega \tr(FF^T)\phi\ud x 
+\int_\Omega \frac{\eta(\phi)}{\kappa(\phi)}\phi |u|^2 \ud x \label{ener_eq}
\shortintertext{where the right hand side terms can be bounded from above by}
&\lesssim |F||F_t||\phi|_{2} + |F|^2_2|\phi_t|+ |\phi|_2|u|^2.\label{ener_ineq}
\end{align}
Observe that \eqref{ener_eq}-\eqref{ener_ineq} do not provide a closed estimate. We will now derive further higher-order estimates. For that purpose we introduce
\begin{align}\label{defZ}\mathcal{Z}:=|\nabla u|^2+|u_t|^2+| \Delta\phi|^2+|\nabla \phi_t|^2 + |F|^2_2 + |F_t|^2.\end{align}
 We begin by testing \eqref{pde}$_1$ with $A u$ which gives us,
 	\begin{align*}
 	\frac12\frac{\ud}{\ud t}\int_\Omega|\nabla u|^2\ud x +(u\cdot \nabla u, Au)- & (\text{div}(\eta(\phi)\nabla u),Au)=(\lambda\Delta\phi\nabla\phi,Au) \\
 	&	\nonumber-\lambda_e(\text{div}((1-\phi)(FF^T-I)),Au) - (\frac{\eta(\phi)}{\kappa(\phi)}(1-\phi)u,Au) .
 	\end{align*}
 Note that here we have used the relation \eqref{mutophi} to obtain the first term on the right hand side.
 
  We use the classical Stokes theory that suggests the existence of $p^* \in L^2(0,T;H^1(\Omega))$ with $\langle p^*\rangle_{\Omega}=0$, such that $(u,p^*)$ satisfies the equation $-\Delta u+ \nabla p^*=Au$ for a.e $t$. Furthermore, we know that for some $C>0$,
 $$|p^*|_1 \leq C|Au|.$$
 Also observe that, thanks to Proposition 1.2 in \cite{T_NSE}, we have
  $$|p^*|_{L^2(\Omega)/\R} \leq |\nabla p|_{\bH^{-1}(\Omega)} \leq |Au|_{\bH^{-1}(\Omega)} \leq |u|_1.$$ 
 Now we use the above two inequalities and obtain the following estimates.
\begin{align*}
-(\text{div}(\eta(\phi)\nabla u),A u) &= -(\eta(\phi)\Delta u,A u)-(\eta'(\phi)\nabla \phi\cdot \nabla u,A u)\\
& = (\eta(\phi)Au,Au) -(\eta(\phi)\nabla p^*,Au)- (\eta'(\phi)\nabla \phi\cdot \nabla u,A u)\\
& \geq \alpha|Au|^2 + (\eta'(\phi)\nabla \phi\, p^*,Au)-(\eta'(\phi)\nabla \phi\cdot \nabla u,A u).
\end{align*}

The two terms on the right hand side of the above inequality can be treated using the H\"older inequality with powers 2,3,6 and the Sobolev embeddings $H^{\frac12}(\Omega)\subset L^3(\Omega), H^1(\Omega)\subset L^6(\Omega)$ \begin{align*}
|(\eta'(\phi)\nabla\phi \, p^*,Au)|
&\leq C|Au||p^*|^{\frac12}| p^*|_1^{\frac12}|\phi|_2\\
&\leq C|Au|^{\frac32} |u|_1^{\frac12}|\phi|_2\\
& \leq \frac{\alpha}{12}|Au|^2 + C|u|_1^2|\phi|_2^4,\\
| (\eta'(\phi)\nabla \phi\cdot \nabla u,A u)| &\leq C|\nabla \phi|_1|\nabla u|^{\frac12}|\partial \nabla u|^{\frac12} |A u|\\
&\leq \frac{\alpha}{12}|A u|^2 + C|\phi|^4_2|\nabla u|^2.
\end{align*}
Also observe that,
\begin{align*}
|(u\cdot \nabla u,Au)| &\leq |u|_{\bL^6(\Omega)}|\nabla u|_{\bL^3(\Omega)}|Au|\\
&\leq |u|_1|u|_1^{\frac12}|Au|^{\frac12}|Au|\\
&\leq \frac{\alpha}{12}|Au|^2 + C|u|_1^6.
\end{align*}
{We again use interpolation inequalities to obtain the following bounds on the rest of the terms. First,}\begin{align*}
|(\Delta\phi \nabla \phi,A u)| &\leq C |\Delta\phi|_{L^3}|\nabla \phi|_{\bL^6}|Au|\\
&\leq \frac{\alpha}{12}|A u|^2 +C|\Delta\phi|^{}|\nabla\Delta\phi|^{}|\nabla\phi|^2_1 \\&\leq \frac{\alpha}{12}|A u|^2 + {{}\frac{\tau\lambda}{8\tilde{C}}|\phi|_{H^4(\Omega)/ \R}^2} + C| \phi|_2^6\\
& \leq  \frac{\alpha}{12}|A u|^2 +\frac{\tau\lambda}{8}|\Delta^2\phi|^2 + C|\partial_{\bf n}\Delta\phi|^2_{\bH^{\frac12}(\partial\Omega)} + C| \phi|_2^6.
\end{align*}
Here $\tilde{C}>0$ is the same as in Lemma \ref{lem_bi}. Now using \eqref{pde}$_5$ and the boundary conditions \eqref{bc}, we know that
\begin{align*}
\partial_{\bf n}\mu = -\lambda \partial_{\bf n} \Delta \phi + \lambda \gamma f''(\phi) \partial_{\bf n} \phi -\frac{\lambda_e}{2} \partial_{\bf n} \tr(FF^T),
\end{align*}
and hence,
\begin{align*}
\partial_{\bf n}\Delta\phi=-\frac{\lambda_e}{2\lambda}\partial_{\bf n}\tr(FF^T).
\end{align*}
Thanks to an application of a general trace theorem in \cite{LM72}, we know that,
\begin{align}\label{trace}
|\partial_{\bf n}\Delta\phi|_{\bH^{\frac12}(\partial\Omega)} =\frac{\lambda_e}{2} |\partial_{\bf n}\tr(FF^T)|_{\bH^{\frac12}(\partial\Omega)} \leq C|FF^T|_2 \leq C|F|^2_2. 
\end{align}
Thus combining the above arguments we obtain that,
\begin{align*}
|(\Delta\phi\nabla\phi,Au)|\leq  \frac{\alpha}{12}|A u|^2 +\frac{\tau\lambda}{8}|\Delta^2\phi|^2 + C|F|^4_{2} + C| \phi|_2^6.
\end{align*}
Next we have, using that $H^2(\Omega) \subset L^\infty(\Omega)$,
\begin{align*}
|(\nabla \cdot ((1-\phi)(FF^T-I)),A u)| &\leq |(\nabla \phi FF^T,A u)|+ |((1-\phi) \text{ div}(FF^T),A u)|\\
&\leq C |\phi|_{2}|F|^2_2|A u|\\
& \leq \frac{\alpha}{12}|A u|^2 + C|\phi|_2^2|F|^4_2.
\end{align*}
Similarly,
\begin{align*}
\left|(\frac{\eta(\phi)}{\kappa(\phi)}(1-\phi)u,Au)\right|&\lesssim \frac{\beta}{\alpha}|Au||u||\phi|_2\\
&\leq \frac{\alpha}{12}|Au|^2 + C|u|^2|\phi|_2^2.
\end{align*}
All the above estimates together with \eqref{equivnorm} thus give us,
\begin{align*}
\frac12\frac{\ud}{\ud t}|\nabla u|^2 + \frac{\alpha}{2}|A u|^2 \leq \frac{\tau\lambda}{8}|\Delta^2\phi|^2 + C\Big(1+| u|_1^6+ |\Delta \phi|^8+|\Delta\phi|^6 + |\Delta\phi|^4|u|_1^2 +|\Delta\phi|^2|F|^4_2+ |F|^4_2\Big).
\end{align*}
That is, with $\mathcal{Z}$ as defined in \eqref{defZ}, we obtain,
\begin{align}\label{uH1}
\frac12\frac{\ud}{\ud t}|\nabla u|^2 + \frac{\alpha}{2}|A u|^2 \leq \frac{\tau\lambda}{8}|\Delta^2\phi|^2 + C(1+\mathcal{Z})^4.
\end{align}
Next, we test \eqref{pde}$_4$ with 
$\Delta^2 \phi$. 
Using \eqref{tensor} and \eqref{pde}$_5$ we obtain,
\begin{align*}
\frac12\frac{\ud }{\ud t}|\Delta \phi|^2 + {\int_{\partial \Omega}\phi_t\partial_{\bf n}\Delta \phi\ud \Gamma}+(u \cdot \nabla \phi,\Delta^2 \phi) +& \tau\lambda|\Delta^2 \phi|^2=- (\frac{\lambda_e}{2}\tau\Delta \tr(FF^T),\Delta^2\phi) \\&+\tau\lambda\gamma(f''(\phi)\Delta\phi+f'''(\phi)|\nabla \phi|^2,\Delta^2\phi) .
\end{align*}
We first treat the boundary term in the above equation using the boundary conditions \eqref{bc}. Observe that since $\partial_{\bf n}\phi_t=0$ on $\partial\Omega$, thanks to the divergence theorem, we have
\begin{align*}
|\int_{\partial \Omega}\phi_t\partial_{\bf n}\Delta \phi\ud \Gamma| &= |\int_{\partial \Omega}\partial_{\bf n}(\phi_t\Delta\phi )\ud \Gamma|\\
&=|\int_\Omega \Delta (\phi_t \Delta \phi) \ud x|\\
&=|\int_\Omega \Delta\phi_t \Delta \phi + 2\nabla \phi_t \cdot\nabla\Delta \phi
+\phi_t \Delta^2 \phi \ud x|\\
&=|\int_\Omega  \nabla\Delta\phi_t \cdot \nabla \phi
+\phi_t \Delta^2 \phi \ud x|\\
& \leq C |\nabla \Delta\phi_t||\nabla\phi| + |\phi_t||\Delta^2\phi|\\
&\leq \frac{\tau\lambda}{8}|\nabla\Delta\phi_t|^2 + \frac{\tau\lambda}{8}|\Delta^2\phi|^2 + C|\nabla\phi|^2 + C|\phi_t|^2.
\end{align*}

\noindent Next, using again the H\"older and Sobolev inequalities we find the following estimates:
\begin{align*}
|(u \cdot \nabla \phi,\Delta^2 \phi)| &\leq \frac{\tau\lambda}{8}|\Delta^2\phi|^2 + C|u|^2_1|\phi|^2_2,\\
|(\Delta \tr(FF^T),\Delta^2\phi)| &\leq |\Delta^2\phi||\Delta \tr(FF^T)|\\
&\leq |\Delta^2\phi||F|_2|F|_{\bL^\infty(\Omega)}\\ 
&\leq \frac{\tau\lambda}{8}|\Delta^2\phi|^2 + C|F|^4_2.
\end{align*}
Recalling that $f''$ and $f'''$ are quadratic and linear respectively, we also obtain
\begin{align*}
|(f''(\phi)\Delta\phi+f'''(\phi)|\nabla \phi|^2,\Delta^2\phi)|&\lesssim |\phi|_{L^\infty(\Omega)}^{2}|\Delta\phi||\Delta^2\phi| + |\phi|_{L^\infty(\Omega)}|\nabla \phi|_{\bL^4(\Omega)}^2|\Delta^2\phi|\\
&\lesssim |\phi|_2^{2}|\Delta\phi||\Delta^2\phi| + |\phi|_{2}|\nabla \phi|_1^2|\Delta^2\phi|\\
&\leq \frac{\tau\lambda}{8}|\Delta^2\phi|^2 + C|\phi|^6_2.
\end{align*}
Hence, recalling also \eqref{equivnorm}, we obtain,
\begin{align*}
\frac12\frac{\ud }{\ud t}| \Delta\phi|^2  + \frac{\tau\lambda}{2}|\Delta^2 \phi|^2
\leq \frac{\tau\lambda}{8}|\nabla\Delta\phi_t|^2   +  C(1+|\Delta\phi|^6 + |u|^2_1|\Delta\phi|^2+|\nabla\phi|^2+|\phi_t|^2+ |F|^4_2) .
\end{align*}
That is we have,
\begin{align}\label{phiH2}
\frac12\frac{\ud }{\ud t}| \Delta\phi|^2  + \frac{\tau\lambda}{2}|\Delta^2 \phi|^2
\leq \frac{\tau\lambda}{8}|\nabla\Delta\phi_t|^2   +  C(1+\mathcal{Z})^3 .
\end{align}
Now we apply $\partial_t$ to \eqref{pde}$_1$, use \eqref{mutophi} and then test with $u_t$: we find
\begin{align*}
(u_{tt},u_t) + (u_t \cdot \nabla u,u_t)+ (\eta'(\phi)\phi_t \nabla u,\nabla u_t) + \int_\Omega\eta(\phi)|\nabla u_t|^2\ud x=(\Delta \phi_t\nabla \phi+\Delta\phi\nabla \phi_t,u_t)\\
\qquad+\lambda_e(\phi_t(FF^T-I)+\partial_t(FF^T)(1-\phi),\nabla u_t)\\
-\left( (\frac{\eta'(\phi)}{\kappa(\phi)} -\frac{\eta(\phi)}{\kappa^2(\phi)}\kappa'(\phi))\phi_t(1-\phi)u - \eta(\phi)\phi_t u +\eta(\phi)(1-\phi)u_t,u_t\right).
\end{align*}
Now we will obtain estimates on each term appearing in the above equation. First observe that,
\begin{align*}
|(u_t \cdot \nabla u,u_t)| &\lesssim |u_t|_{\bL^3(\Omega)}|\nabla u||u_t|_{\bL^6(\Omega)} \\
&\lesssim |u_t|^{\frac12}|u_t|_1^{\frac32}|u|_1\\
&\leq \frac{\alpha}{8}|u_t|^2_1 + C|u_t|^2|u|^4_1.\end{align*}
Similarly using interpolation inequalities 
we obtain,
\begin{align*}|(\eta'(\phi)\phi_t \nabla u,\nabla u_t)| & \lesssim |\eta'|_{L^\infty(\Omega)} |\phi_t|_{L^6(\Omega)}|\nabla u|_{\bL^3(\Omega)}|\nabla u_t|\\&\lesssim |\eta'|_{L^\infty(\Omega)} |u_t|_1|\phi_t|_1|u|^{\frac12}_1|u|_2^{\frac12}\\& \leq \frac{\alpha}{8}|u_t|_1^2 + \frac{\alpha}{4}|u|_2^2 + C|\phi_t|_1^4|u|_1^2. \end{align*}
Next,
{\begin{align*}
|(\Delta \phi_t\nabla \phi+\Delta\phi\nabla \phi_t,u_t)| & \lesssim |(\Delta \phi_t,(u_t\cdot \nabla \phi))| +|\Delta\phi\nabla \phi_t,u_t)| \\
&\lesssim |\Delta \phi_t||u_t|_{\bL^3(\Omega)}|\nabla\phi|_{\bL^6(\Omega)}+ |\nabla\phi_t||u_t|_{\bL^6(\Omega)}|\Delta\phi|_{L^3(\Omega)}\\
&\lesssim |\Delta \phi_t||u_t|^{\frac12}|\nabla u_t|^{\frac12}|\phi|_2+ |\nabla\phi_t||u_t|_1|\Delta\phi|^{\frac12}|\Delta\phi|_1^{\frac12};
\shortintertext{applying the Schwarz inequality and the generalized Poincar\'e inequality since  $\langle\Delta\phi_t\rangle_\Omega=0$, we obtain}
&\leq \frac{\tau\lambda}{8}|\nabla\Delta \phi_t|^2 + C|u_t||u_t|_1|\phi|_2^2+ \frac{\tau\lambda}{8\tilde{C}}|\Delta\phi|_1+ C|\nabla\phi_t|^2|u_t|^2_1|\Delta\phi|\\
& \leq \frac{\tau\lambda}{8}|\nabla\Delta \phi_t|^2+\frac{\alpha}{8}|u_t|_1^2 +   C|u_t|^2|\phi|_2^4 + {\frac{\tau\lambda}{8\tilde{C}}|\phi|_{H^4(\Omega)/ \R}^2} +C|\nabla\phi_t|^4|\Delta\phi|^2;
\shortintertext{using Lemma \ref{lem_bi} and \eqref{trace} again, we further obtain}
&\leq \frac{\tau\lambda}{8}|\nabla\Delta \phi_t|^2+\frac{\alpha}{8}|u_t|_1^2 + {\frac{\tau\lambda}{8\tilde{C}}|\phi|_{H^4(\Omega)/ \R}^2}+C|u_t|^2|\phi|_2^4  + C|\nabla\phi_t|^4|\Delta\phi|^2\\
&\hspace{-1in}\leq \frac{\tau\lambda}{8}|\nabla\Delta \phi_t|^2+\frac{\alpha}{8}|u_t|_1^2 + {\frac{\tau\lambda}{8}|\Delta^2\phi|^2} + C|\partial_{\bf n}\Delta\phi|^2_{\bH^{\frac12}(\partial\Omega)}+C|u_t|^2|\phi|_2^4 + C|\nabla\phi_t|^4|\Delta\phi|^2\\
&\hspace{-0.3in}\leq \frac{\tau\lambda}{8}|\nabla\Delta \phi_t|^2+\frac{\alpha}{8}|u_t|_1^2 + {\frac{\tau\lambda}{8}|\Delta^2\phi|^2} + C|F|^4_{2}+ C|u_t|^2|\phi|_2^4 +C|\nabla\phi_t|^4|\Delta\phi|^2.
\end{align*}}
\noindent
Similarly using the fact that $H^2(\Omega)\subset L^\infty(\Omega)$ we observe that,\begin{align*}
|(\phi_t(FF^T-I)+\partial_t(FF^T)(1-\phi),\nabla u_t)| &\lesssim |\phi_t||F|^2_{\bL^\infty(\Omega)}|\nabla u_t| + |F|_{\bL^\infty(\Omega)}|F_t|{|1-\phi|_{L^\infty(\Omega)}}|\nabla u_t|\\
&\lesssim |\phi_t||F|^2_2|u_t|_1 + |F|_2|F_t||1-\phi|_2|u_t|_1\\
& \leq \frac{\alpha}{8}|u_t|^2_1 + C|F|^4_2|\phi_t|^2+C|F|^2_2|F_t|^2{(1+|\phi|^2_2)},
\shortintertext{and,}
&\hspace{-2.8in}\left|\left( (\frac{\eta'(\phi)}{\kappa(\phi)} -\frac{\eta(\phi)}{\kappa^2(\phi)}\kappa'(\phi))\phi_t(1-\phi)u - \eta(\phi)\phi_t u +\eta(\phi)(1-\phi)u_t,u_t\right)\right| \\&\lesssim |\phi|_2|\phi_t|_1|u|_1 |u_t|+ |\phi|_2|u_t|^2. 
\end{align*}
Combining all the bounds and recalling \eqref{equivnorm} and \eqref{K0} we obtain,
\begin{equation*}\begin{split}
\frac12\frac{\ud}{\ud t}|u_t|^2 + \frac{\alpha}2 |u_t|^2_1 &\leq \frac{\alpha}{4}|u|_2^2+ \frac{\tau\lambda}{8}|\Delta^2\phi|^2+\frac{\tau\lambda}{8}|\nabla\Delta \phi_t|^2\\ 
& +C\Big(1+|u_t|^2(|\Delta\phi|^4+|u|^4_1)+|\phi_t|_1^4|u|_1^2+|\phi_t|_1^4|\Delta\phi|^2+|F|^4_2(|\phi_t|^2+1)\Big)\\
& +C\Big(|F|^2_2|F_t|^2{(1+|\Delta\phi|^2_2)}+|\Delta\phi||\phi_t|_1|u|_1 |u_t|\Big).
\end{split}\end{equation*}
In other words we have,
\begin{equation}\begin{split}\label{ut}
\frac12\frac{\ud}{\ud t}|u_t|^2 &+ \frac{\alpha}2 |u_t|^2_1 \leq \frac{\alpha}{4}|u|_2^2+ \frac{\tau\lambda}{8}|\Delta^2\phi|^2+\frac{\tau\lambda}{8}|\nabla\Delta \phi_t|^2 +C(1+\mathcal{Z})^3.
\end{split}\end{equation}
Next we apply $\partial_t$ to \eqref{pde}$_4$, test it with $-\Delta \phi_t$ and then use the expression \eqref{pde}$_5$ for $\mu$ to obtain,
\begin{align*}
&\frac12\frac{\ud}{\ud t}|\nabla \phi_t|^2  -(u_t \cdot \nabla \phi + u \cdot \nabla \phi_t,\Delta\phi_t)=\tau(\nabla \mu_t, \nabla \Delta \phi_t)\\
&=-\tau\lambda |\nabla \Delta \phi_t|^2+\frac{\lambda_e}{2}\tau (\partial_t(\nabla\tr(FF^T)),\nabla\Delta \phi_t)
 - \tau\lambda\gamma (f''(\phi)\nabla\phi_t+f'''(\phi)\phi_t\nabla\phi  
 ,\nabla\Delta \phi_t).
\end{align*}
Using \eqref{avs} along with the generalized Poincar\'e inequality, we obtain the following estimates on the terms appearing in the equation above.
\begin{align*}
|(u_t \cdot \nabla \phi + u \cdot \nabla \phi_t,\Delta\phi_t)| &\leq C|u_t||\nabla \phi|_1|\nabla \Delta \phi_t| + C|u|_1|\nabla \phi_t||\nabla \Delta \phi_t|\\
&\leq \frac{\tau\lambda}{6}|\nabla \Delta \phi_t|^2 + C(|u_t|^2|\phi|^2_2 + |u|_1^2|\nabla\phi_t|^2),\end{align*}and,
\begin{align*}
|(\partial_t(\nabla\tr(FF^T)),\nabla\Delta \phi_t)| &\leq \left(|F_t|_1|\nabla F|_1+ |F|_{\bL^\infty(\Omega)}|\nabla F_t|\right)|\nabla \Delta\phi_t|
\\& \leq \frac{\tau\lambda}{6}|\nabla \Delta \phi_t|^2 + C|F_t|_1^2|F|_2^2.
\end{align*} Next we use the fact that $f$ is a polynomial of degree $4$ to observe,
\begin{align*}
|(f''(\phi)\nabla\phi_t+f'''(\phi)\phi_t\nabla\phi ,\nabla\Delta \phi_t)| & \lesssim |\phi|^2_{L^\infty(\Omega)}|\nabla \phi_t||\nabla \Delta\phi_t| \\
& \hspace{1in}+ |\phi|_{L^\infty(\Omega)}| \phi_t|_{L^6(\Omega)}|\nabla \phi|_{\bL^3(\Omega)}|\nabla \Delta \phi_t|\\
&\lesssim|\nabla \Delta \phi_t||\nabla \phi_t||\phi|^2_{2} \\
&  \leq \frac{\tau\lambda}{6}|\nabla \Delta \phi_t|^2 + C|\nabla \phi_t|^2|\phi|^4_{2}.
\end{align*} 
Hence, combining the above estimates and using \eqref{equivnorm} we obtain,
\begin{equation}\begin{split}\label{phitH1_0}
\frac12\frac{\ud}{\ud t}|\nabla \phi_t|^2 &+\frac{\tau\lambda}{2} |\nabla \Delta \phi_t|^2 \leq C(1+|u_t|^2|\Delta\phi|^2 + |u|_1^2|\nabla\phi_t|^2+|F_t|_1^2|F|_2^2+|\nabla \phi_t|^2|\Delta\phi|^4).
\end{split}\end{equation}
Observe also that \eqref{pde}$_3$ gives us,
\begin{equation}\begin{split}
|F_t|_1 &\leq |u \cdot \nabla F|_1 + |\nabla u F|_1\\
&\leq C|u|_2|F|_2 \\
& \leq C|u|_1^{\frac12}|u|_3^{\frac12}|F|_2.\label{FtH1}
\end{split}\end{equation}
Substituting \eqref{FtH1} in \eqref{phitH1_0} and thanks to \eqref{equivnorm} we obtain,
\begin{equation}\begin{split}\label{phitH1}
\frac12\frac{\ud}{\ud t}|\nabla \phi_t|^2 &+\frac{\tau\lambda}{2} |\nabla \Delta \phi_t|^2 \leq C|u|_1|u|_3|F|^4_2+ C(1+\mathcal{Z})^3.
\end{split}\end{equation}
We next apply 
$\Delta$ to \eqref{pde}$_3$ and then test with $\Delta F$. That is we consider, 
$$\int_\Omega\tr(\Delta F_t \Delta F^T) \ud x +\int_\Omega \tr(\Delta (u\cdot \nabla F) \Delta F^T)\ud x=\int_\Omega\tr(\Delta(\nabla uF)\Delta F^T)\ud x.$$
Thanks to the divergence free property of $u$, we obtain
\begin{align*}
|\int_\Omega \tr(\Delta(u\cdot \nabla F)\Delta F^T)\ud x|&=|\int_\Omega2\sum_{i,j=1}^d\tr(\nabla u\nabla^2F^{ij})\Delta F^{ij}+\tr(\Delta u \cdot\nabla F \Delta F^T)\ud x| \\
&\leq C |u|_3|F|^2_2,
\end{align*}
where $\nabla^2$ denotes the Hessian. \\
Next observe that,
\begin{align*}
|\int_\Omega \tr(\Delta (\nabla u F)\Delta F^T )\ud x| &\leq C |\nabla u F|_2|F|_2 \\
&\leq C|u|_3|F|_{\bL^\infty(\Omega)}|F|_2\\
&\leq C|u|_3|F|^2_2.
\end{align*}
Since $\int_\Omega\tr(\Delta F_t \Delta F^T) \ud x=\frac12\frac{d}{dt}|\Delta F|^2$, we thus have,
\begin{align}\label{FH2}
\frac12 \frac{\ud }{\ud t}|\Delta F|^2 & \leq C|u|_3|F|^2_2.
\end{align}
Next we apply $\partial_t$ to \eqref{pde}$_3$ and test {the resulting equation} with $F_t$:
$$\int_\Omega\tr( F_{tt}  F^T_t) \ud x +\int_\Omega \tr( (u\cdot \nabla F)_t  F_t^T)\ud x=\int_\Omega\tr((\nabla uF)_t F_t^T)\ud x.$$
Observe that $\int_\Omega\tr((u \cdot \nabla F_t) F_t^T)\ud x=0$ and,
\begin{align*}
|\int_\Omega\tr(u_t \cdot \nabla F F_t^T)\ud x| &\leq C|u_t|_1|F|_2|F_t|\\
&\leq \frac{\alpha}{8}|u_t|_1^2 + C|F|^2_2|F_t|^2.
\end{align*}
Similarly we also have,
\begin{align*}
|\int_\Omega \tr[(\nabla u_t F + \nabla u F_t)F_t^T]\ud x| &\lesssim |\nabla u_t||F|_2|F_t| + |u|_3|F_t|^2\\
&\leq \frac{\alpha}{8}|u_t|_1^2 + C|F|^2_2|F_t|^2 + C|u|_3|F_t|^2.
\end{align*}
Thus we obtain
\begin{align*}
\frac12 \frac{\ud}{\ud t}|F_t|^2 \leq \frac{\alpha}{4}|u_t|_1^2 + C\left( |F|^2_2|F_t|^2 + |u|_3|F_t|^2\right) . 
\end{align*}
That is,
\begin{align}\label{Ft}
\frac12 \frac{\ud}{\ud t}|F_t|^2 \leq \frac{\alpha}{4}|u_t|_1^2 + C|u|_3|F_t|^2+ C(1+\mathcal{Z})^2. 
\end{align}
Now we combine \eqref{uH1}, \eqref{phiH2}, \eqref{ut}, \eqref{phitH1}, \eqref{FH2} and \eqref{Ft} to obtain,
\begin{equation}\begin{split}\label{ineq1}
&\frac12 \frac{\ud}{\ud t}\Big(|\nabla u|^2+|u_t|^2+| \Delta\phi|^2+|\nabla \phi_t|^2 + |F|^2_2 + |F_t|^2\Big)\\
&\qquad\quad + \frac{\alpha}{4}| u|_2^2+ \frac{\alpha}4 |u_t|^2_1 +  \frac{\tau\lambda}{4}| \Delta^2\phi|^2+\frac{\tau\lambda}{4} |\nabla \Delta \phi_t|^2 
\\
&\hspace{1in} 
\leq C\Big(1+| u|_1^6+ |\phi|^8_2+|\phi|^6_2+|F|^4_2 + |\phi|^4_2|u|_1^2 +|\phi|^2_2|F|^4_2+|u|^2_1|\Delta\phi|^2\Big)\\
&\hspace{1in}  +C\Big(|u_t|^2|u|^4_1+|\phi_t|_1^4|u|_1^2+|\phi_t|_1^4|\Delta\phi|^2+|F|^4_2|\phi_t|^2+|F|^2_2|F_t|^2|\Delta\phi|^2\Big)\\
&\hspace{1in}  + C\left( |u_t|^2|\Delta\phi|^2 + |u|_1^2|\phi_t|_1^2+| \phi_t|_1^2|\Delta\phi|^4+|F|^2_2|F_t|^2\right) \\
&\hspace{1in}  +C|u|_3\left( |u|_1|F|_2^4+|F|^2_2 + |F_t|^2\right) .
\end{split}\end{equation}
That is, for $\mathcal{Z}$ as defined in \eqref{defZ}, we have
\begin{equation}\begin{split}\label{ineqZ1}
\frac12 \frac{\ud}{\ud t}\mathcal{Z} +  \mathcal{M}
 \leq C_0|u|_3 \mathcal{Z}^3+C(1+\mathcal{Z})^4 .
\end{split}\end{equation}
where $C_0>0$ and,
\begin{align}
\mathcal{M}&:=\frac{\alpha}{4}| u|_2^2+ \frac{\alpha}4 |u_t|^2_1 +  \frac{\tau\lambda}{4}| \Delta^2\phi|^2+\frac{\tau\lambda}{4} |\nabla \Delta \phi_t|^2.\label{defM}\end{align}
Hence to close our estimates we need to control $|u|_3$ appearing on the right hand side of \eqref{ineqZ1}.\\
For that purpose we use Lemma \ref{Stokes} on equation \eqref{pde}$_1$ and interpolation inequalities as follows,
\begin{equation}\begin{split}\label{app_stokes}	|u|_3 + \left|\frac{p}{\eta(\phi)}\right|_1 &\leq C \left[ 1+ \left( 1+ | \phi|_{2}^2\right) \left( |\phi|_2^2+ |\phi|_2^{\frac12}|\phi|_3^{\frac12}\right)  \right] \Big( |u_t|_1 + |u\cdot \nabla u|_1 \\&+ |\nabla \cdot (1-\phi)(FF^T-I)|_1 +| \Delta\phi\nabla \phi|_1+\left|\eta(\phi)\frac{(1-\phi)u}{\kappa(\phi)}\right|_1\Big).
\end{split}\end{equation}
The above form is obtained by using equation \eqref{tensor} in \eqref{pde}$_1$ and including the gradient term on the right hand side of \eqref{tensor} in the pressure gradient appearing in \eqref{pde}$_1$.\\
Now we treat each one of the terms appearing on the right hand side of the above inequality using interpolation inequalities and the generalized Poincar\'e inequality.
\begin{align*}
|u\cdot \nabla u|_1 &\leq C |\nabla (u\cdot \nabla u)| \\
&\leq C(|\nabla u \cdot \nabla u| + |u\cdot \nabla \partial u|)\\
&\leq C|\nabla u|_{\bL^3(\Omega)}|\nabla u|_{\bL^6(\Omega)}+ |u|_{\bL^6(\Omega)}|Du|_{\bL^3(\Omega)}\\
& \leq C\left( |\nabla u|^{\frac12}|u|_2^{\frac32} + |\nabla u||u|_2^{\frac12}|u|_3^{\frac12}\right) .\end{align*}
Next, observe that the continuous embedding $H^2(\Omega)\subset L^\infty(\Omega)$ gives us,
\begin{align*}
|\text{div}(1-\phi)(FF^T-I)|_1 &\leq |\nabla \phi(FF^T-I)|_1 + |(1-\phi) \nabla FF^T|_1\leq C|\phi|_2|F|^2_2;\end{align*}
{additionally we have,}\begin{align*}
|\Delta\phi \nabla \phi|_1 &\leq C|\Delta \phi|_1|\nabla \phi|_{\bL^\infty(\Omega)} \\
&\leq C|\phi|_3^2\\
& \leq C|\phi|_2|\phi|_4,
\shortintertext{and similarly,} 
\left|\eta(\phi)\frac{(1-\phi)u}{\kappa(\phi)}\right|_1 & \leq C(\beta) |\phi|_2|u|_1.
\end{align*}
Combining all the bounds above and using the interpolation inequality $|\phi|_3\leq C|\phi|_2^{\frac12}|\phi|^{\frac12}_4$, we obtain
\begin{align}
|u|_3  &\leq C \left[ \left( 1+ | \phi|_{2}^2\right) \left( |\phi|^2_2+|\phi|_2^{\frac34}|\phi|_4^{\frac14}\right)  \right] \Big( |u_t|_1+  |\nabla u|^{\frac12}|u|_2^{\frac32} 
+|\phi|_2|F|^2_2+|\phi|_2|\phi|_4
+|\phi|_2|u|_1\Big) \nonumber\\
& +C\left[ \left( 1+ | \phi|_{2}^4\right) \left( |\phi|^4_2+ |\phi|_2^{\frac32}|\phi|_4^{\frac12}\right)  \right]|\nabla u|^2|u|_2.\label{uH3}
\end{align}

Next we aim to use \eqref{uH3} in \eqref{ineqZ1}. First we treat each term appearing in the bounds for $|u|_3\mathcal{Z}^3$, obtained using \eqref{uH3}, by applying the H\"older inequality as follows:
\begin{align*}
C_0|u|_3\mathcal{Z}^3 &\lesssim \left[\left( 1+ | \phi|_{2}^2\right) \left( |\phi|^2_2+ |\phi|_2^{\frac34}|\phi|_4^{\frac14}\right)  \right] \Big(  |u_t|_1+ | u|_1^{\frac12}|u|_2^{\frac32} 
+|\phi|_2(|F|^2_2+|u|_1)+|\phi|_2|\phi|_4\Big)\mathcal{Z}^3\\
&\qquad +\left[ \left( 1+ | \phi|_{2}^4\right) \left( |\phi|^4_2+ |\phi|_2^{\frac32}|\phi|_4^{\frac12}\right)  \right]| u|_1^2|u|_2\mathcal{Z}^6 \\
& \leq \frac{\alpha}{8} |u|_2^{2} + \frac{\alpha}{8}|u_t|_1^2+ \frac{\tau\lambda}{8\tilde{C}}|\phi|_4^2 \\
&\qquad +C\left(|u|_1^4(1+|\phi|_2^{2})|\phi|_2^{\frac34}\mathcal{Z}^{3}\right)^8+C\left((|F|^2_2+|u|_1)|\phi|_2^{\frac74}(1+|\phi|_2^{2})\mathcal{Z}^3\right)^{\frac87}\\
& \qquad +C\left((1+|\phi|_2^{2})|\phi|_2^{\frac74}\mathcal{Z}^3\right)^{\frac83}+ C \left( |u|_1^2(1+|\phi|_2^{4})|\phi|_2^{\frac32}\mathcal{Z}^{6}\right)^4 + \,\, \text{l.o.t.} \end{align*}
That is we have,
\begin{align}\label{u3z}
C_0|u|_3\mathcal{Z}^3 &\leq  \frac{\alpha}{8} |u|_2^{2} + \frac{\alpha}{8}|u_t|_1^2+ \frac{\tau\lambda}{8\tilde{C}}|\phi|_4^2 + C(1+\mathcal{Z})^{27}.
\end{align}
We again use Lemma \ref{lem_bi} and \eqref{trace} to write
$$\frac{\tau\lambda}{8\tilde{C}}|\phi|^2_4 \leq \frac{\tau\lambda}{8}|\Delta^2\phi|^2 + C(1+|F|^4_2).$$
We now substitute \eqref{u3z} in \eqref{ineqZ1}. Recalling the definition of $\mathcal{Z}$ and $\mathcal{M}$ given in \eqref{defZ} and \eqref{defM} respectively, we write
\begin{equation}\begin{split}\label{ineq}
& \frac{\ud}{\ud t}\mathcal{Z} +  \mathcal{M}
\leq  C_1 (1+\mathcal{Z})^{27},
\end{split}\end{equation}
where $C_1$ is an absolute constant depending on the data. We infer from \eqref{ineq} that
$$1+\mathcal{Z}(t) \leq 1+\mathcal{Z}(0) +C_12^{27}(1+\mathcal{Z}(0))^{27}t$$
as long as $1+\mathcal{Z}(t) \leq 2(1+\mathcal{Z}(0))$, which happens for $t \leq T_0$,
$$1+\mathcal{Z}(0)+C_12^{27}(1+\mathcal{Z}(0))^{27}T_0 \leq 2(1+\mathcal{Z}(0))$$
that is,
\begin{align}
t \leq T_0:=\frac{1}{C_12^{26}(1+\mathcal{Z}(0))^{26}}.
\end{align}
We now infer from \eqref{ineq} that for $0 \leq t \leq T_0$,
\begin{align}
1+\mathcal{Z}(t) &\leq 2(1+\mathcal{Z}(0)),\label{boundZ}\\
\text{and, }\int_0^{T_0}\mathcal{M}(s)\ud s &\leq  2(1+\mathcal{Z}(0)).\label{boundM}
\end{align}
We give below the interpretation of \eqref{boundZ}-\eqref{boundM} in terms of the functions, but we first observe that the calculations are complete (i.e. closed), if we express $\mathcal{Z}(0)$ in terms of the initial data, that is $u_t(0),\phi_t(0),F_t(0)$, which we do now.
\\
Thanks to \eqref{mutophi}, we have,
\begin{equation}\begin{split}\label{ut0}
|u_t(0)| &\leq |u_0\cdot \nabla u_0| + | \eta'(\phi_0)\nabla \phi_0\cdot \nabla u_0| +| \eta(\phi_0)\Delta u_0|+ |\nabla\phi_0(F_0F_0^T -I)|\\
&\qquad \quad+  |(1-\phi_0)\text{div}(F_0F_0^T )|  + 
|\Delta\phi_0\nabla \phi_0|+ |\frac{\eta(\phi_0)}{\kappa(\phi_0)}(1-\phi_0)u_0|\\
& \lesssim |u_0|_2^2 + |\phi_0|_2|u_0|_2 + |u_0|_2 + |\phi_0|_2|F_0|_2^2 + |\phi_0|_3|\phi_0|_2.
\end{split}
\end{equation}
Using the fact that $f''$ is quadratic we see,
\begin{equation}\label{phit0}
\begin{split}
|\phi_t(0)|_1 &\leq |(u_0\cdot \nabla\phi_0)|_1 + |\phi_0|_3 + |f''(\phi_0)\nabla \phi_0| + |\tr(F_0F_0^T)|_1\\
&\leq |u_0|_2|\phi_0|_2 + |\phi_0|_3 + |\phi_0|_2^3 +|F_0|^2_2.
\end{split}
\end{equation}
Similarly,
\begin{equation}\label{Ft0}
\begin{split}
|F_t(0)| &\leq |u_0\cdot \nabla F_0| + |\nabla u_0 F_0|\\
& \leq |u_0|_2|F_0|_2.
\end{split}\end{equation}
Thanks to the Gr\"onwall inequality and \eqref{ut0},\eqref{phit0} and \eqref{Ft0}, we know that there exist constants $T_0>0$ and $K_1>0$ depending only on the initial conditions such that,
\begin{align}\label{linfty_est}
 |u(t)|_1^2+|u_t(t)|^2+|\Delta \phi(t)|^2+| \phi_t(t)|_1^2 + |F(t)|^2_2 + |F_t(t)|^2 \leq K_1 \qquad \forall t \in [0,T_0].
\end{align}
Now we integrate \eqref{ineq} over $[0,T_0]$ and use \eqref{linfty_est} to obtain that there exists $K_2>0$ that depends on $T_0$ and $K_1$ such that,
\begin{equation}\label{l2_est}
\int_0^{T_0}(\frac{\alpha}{8}|\Delta u|^2+ \frac{\alpha}4 |u_t|^2_1 +  \frac{\tau\lambda}{8}| \Delta^2\phi|^2+\frac{\tau\lambda}{2} |\nabla \Delta \phi_t|^2 )\ud t \leq K_2.
\end{equation}
Furthermore, using Lemma \ref{Stokes} and earlier estimates on the right hand side terms in \eqref{app_stokes}, we have,
\begin{align*}	
|u|_3  &\leq C \left[ \left( 1+ | \phi|_{2}^2\right) \left( |\phi|_2^{\frac12}|\phi|_3^{\frac12}\right)  \right] \Big( |u_t|_1+  |\nabla u|^{\frac12}|u|_2^{\frac32} 
+|\phi|_2|F|^2_2+|\phi|_2(|\Delta^2\phi|+|F|_2^2)
+|\phi|_2|u|_1\Big)\\
& +C\left[ \left( 1+ | \phi|_{2}^4\right) \left(  |\phi|_2^{}|\phi|_3^{}\right)  \right]|\nabla u|^2|u|_2,\end{align*}
which gives, for some $K_3>0$,
\begin{align}\label{ul2h3}
|u|_{L^2(0,T_0;\bH^3(\Omega))} \leq K_3.
\end{align}
{Now we sum up our findings in \eqref{linfty_est}, \eqref{l2_est} and \eqref{ul2h3} and conclude that for some $K_4>0$ and any $t \in [0,T_0]$,
		\begin{equation}\begin{split} \label{ests}
	|u(t)|_1^2+|u_t(t)|^2+|\Delta \phi(t)|^2+| \phi_t(t)|_1^2 + |F(t)|^2_2 + |F_t(t)|^2\\
+\int_0^{T_0}(| u|_3^2+  |u_t|^2_1 +  | \Delta^2\phi|^2+|\nabla \Delta \phi_t|^2 )\ud t	\leq K_4.
\end{split}\end{equation}
}

With these final estimates we conclude this section.

\section{Galerkin scheme}\label{galerkin}
\begin{proof}[Proof of Theorem \ref{main}] We will employ the Faedo-Galerkin approximation method in this proof.
	As usual, to construct the Galerkin scheme for equation \eqref{pde}-\eqref{bc} we use the orthonormal basis {$(w_k)^\infty_{k=1}$} and {$(e_k)^\infty_{k=1}$} of $H$ and $L^2(\Omega)^d$ respectively consisting of the eigenvectors of the Stokes operator $A$ and that corresponding to the Neumann eigenvalues of the operator $-\Delta +I$ respectively. Similarly, for the $F$ equation we consider eigenfunctions $(M_n)_{n=1}^\infty$ of the Laplace operator that form
	an orthonormal basis of $L^2(\Omega)^{d\times d}$. For each $n\ge 1$ consider the $n$-dimensional subspaces of $H$, $L^2(\Omega)^d$ and $L^2(\Omega)^{d\times d}$:
	\begin{align*}
	V^1_n:
	&	=\text{span}\{w_1,...,w_n\}, \\
	V^2_n:
	&	=\text{span}\{e_1,...,e_n\},\\
	V^3_n:
	&	=\text{span}\{M_1,...,M_n\}.
	\end{align*}
	Let $\Pj^1_n: H\rightarrow V^1_n \,$, $\,\Pj^2_n: L^2(\Omega)\rightarrow V^2_n$ and $\Pj^3_n: L^2(\Omega)^{d\times d}\rightarrow V^3_n$ be the orthonormal projections.\\
	
	Now for $v \in V^1_n$, $\psi \in V^2_n$ and $\Xi \in V^3_n$ we consider the approximating equations
	\begin{equation}\begin{split}\label{galerkin_eqn}
	&\langle\frac{\partial u_n}{\partial t},v\rangle + (u_n \cdot \nabla u_n,v) +  (\eta(\phi_n)\nabla u_n,\nabla v) =  (\lambda_e(1-\phi_n)(F_nF_n^T-I),\nabla v) \\&\qquad \qquad +\lambda  (\nabla \phi_n \otimes \nabla \phi_n,\nabla v)
	+(\eta(\phi_n)\frac{(1-\phi_n)u_n}{\kappa(\phi_n)},v),\\
	&\langle\frac{\partial \phi_n}{\partial t},\psi\rangle + (u_n \cdot \nabla \phi_n,\psi) + \tau(\nabla \mu_n, \nabla \psi)=0,\\
	&\langle\frac{\partial F_n}{\partial t}, \Xi\rangle+ (u_n\cdot \nabla F_n,\Xi)=(\nabla u_n F_n,\Xi).
\end{split}	\end{equation} 
	Here we have, 
	\begin{align}
	\mu_n:=\Pj^2_n\big(-\lambda \Delta \phi_n  + \lambda\gamma f'(\phi_n)&-\frac{\lambda_e}{2}\tr(F_nF_n^T-I)\big).
	\end{align}
	Using standard techniques, one can prove the existence of $u_n, \phi_n$ and $F_n$ that solve \eqref{galerkin_eqn} for some time $0<t_n \leq T$. The a priori estimates derived in \eqref{ests} carry over to $u_n,\phi_n,F_n$ {\it with the same defintion of $T_0$ as in \eqref{ests}}. Hence, we see that $t_n \geq T_0$ and,
\begin{align*}
&u_n \text{ is bounded in } L^\infty(0,T_0;V) \cap L^2(0,T_0;H^3(\Omega)^d) \text{ independently of } n,\\ &\phi_n \text{ is bounded in } L^\infty(0,T_0;H^2(\Omega)) \cap L^2(0,T_0;H^4(\Omega)) \text{ independently of } n,\\
&F_n \text{ is bounded in } L^\infty(0,T_0;H^2(\Omega)^{d\times d})\text{ independently of } n,
\shortintertext{and,}
&(u_n)_t \text{ is bounded in } L^\infty(0,T_0;H) \cap L^2(0,T_0;V) \text{ independently of } n,\\
& (\phi_n)_t \text{ is bounded in } L^\infty(0,T_0;H^1(\Omega)) \cap L^2(0,T_0;H^3(\Omega)) \text{ independently of } n,\\
&(F_n)_t \text{ is bounded in }  L^\infty(0,T_0;L^2(\Omega)^{d\times d}) \text{ independently of } n.
\end{align*}
Hence $(u_n,\phi_n,F_n)$ converges to $(u,\phi,F)$, for some subsequence $n \rightarrow \infty$, in the following sense:
\begin{align*}
&u_n \rightharpoonup u \text{ weak star in }L^\infty(0,T_0;V) \text{ and weakly in }L^2(0,T_0;H^3(\Omega)^d),\\
&(u_n)_t \rightharpoonup u_t \text{ weak star in }L^\infty(0,T_0;H) \text{ and weakly in }L^2(0,T_0;V),\\
&u_n \rightarrow u \text{ a.e. in } \Omega \times (0,T_0) \text{ and in } L^2(0,T_0;D(A)),\\
&\phi_n \rightharpoonup u \text{ weak star in }L^\infty(0,T_0;H^2(\Omega)) \text{ and weakly in }L^2(0,T_0;H^4(\Omega)),\\
&(\phi_n)_t \rightharpoonup \phi \text{ weak star in }L^\infty(0,T_0;H^1(\Omega)) \text{ and weakly in }L^2(0,T_0;H^3(\Omega)),\\
&\phi_n \rightarrow \phi \text{ a.e. in }\Omega \times (0,T_0) \text{ and in } L^2(0,T_0;H^{3+\epsilon}(\Omega)) \quad \forall\e \in [0,1),\\
&F_n \rightharpoonup F \text{ weak star in }L^\infty(0,T_0;H^2(\Omega)^{d\times d}),\\
&(F_n)_t \rightharpoonup F_t \text{ weak star in }L^\infty(0,T_0;L^2(\Omega)^{d\times d}).
\end{align*}
Here we have also used the Aubin-Lions compactness theorem and the spaces $H^{3+\epsilon}(\Omega))$ are defined by interpolation for $0<\e<1$ (see e.g. \cite{Tem97}). We can now pass to the limit $n \rightarrow \infty$ in \eqref{galerkin_eqn} using a standard argument.
 In order to obtain continuity in time results for $u,\phi,F$,  we use the following result  (Theorem 2.3 in \cite{LM72}): Given $X$ and $Y$ two Hilbert spaces, the space $W_{2,2}:=\{v \in L^2(0,T;X), v_t \in L^2(0,T;Y)\}$ is continuously embedded in $C([0,T];[X,Y]_{\frac12})$ where $[X,Y]_{\frac12}$ is the interpolation space of order $\frac12$ of $X$ and $Y$. This implies that,
	\begin{align*}
	u \in C([0,T_0]; \bH^2(\Omega)), \,\, \phi \in C([0,T_0];H^3(\Omega)), \,\, F\in C([0,T_0];H^1(\Omega)^{d\times d}).
	\end{align*}
Since $u$ takes values in $\bH^2(\Omega)$, and the norm of $D(A)$ is equivalent to that of $\bH^2(\Omega)$, we also see that $u$ belongs to $C([0,T_0];D(A))$.\\
 This completes the proof for the existence part of Theorem \ref{main}.\\

\begin{proof}[Proof of uniqueness]
We will now prove the uniqueness of strong solutions to the equations \eqref{pde}-\eqref{bc}.

Assume that there exist two solutions $(u_1,\phi_1,F_1)$ and $(u_2,\phi_2,F_2)$ to the equations \eqref{pde}-\eqref{bc} in the class mentioned in \eqref{strong}. Let $(u,\phi,F):=(u_1,\phi_1,F_1)-(u_2,\phi_2,F_2)$. Then we can see that $(u,\phi,F)$ solves the following difference equations
\begin{equation}\label{diff}
\begin{split}
&\rho(u_t + u_1\cdot \nabla u + u \cdot \nabla u_2 ) -\nabla \cdot (\eta(\phi_1)\nabla u+(\eta(\phi_1) -\eta(\phi_2))\nabla u_2=\\
&\qquad \qquad -\nabla \cdot (\nabla \phi_1 \otimes \nabla \phi + \nabla \phi_2\otimes \nabla\phi ) \\
&\qquad \qquad +\lambda_e \nabla \cdot ((1-\phi_1)(F_1F^T+FF_2^T)-\phi(F_2F_2^T-I))\\
&\qquad \qquad -\eta(\phi_1)\frac{(1-\phi_1)u_1}{\kappa(\phi_1)}+\eta(\phi_2)\frac{(1-\phi_2)u_2}{\kappa(\phi_2)},\\
& \nabla \cdot u=0,\\
& F_t + u_1 \cdot \nabla F + u \cdot \nabla F_2=\nabla u_1 F + \nabla uF_2,\\
&\phi_t+ u_1\cdot \nabla \phi + u\cdot \nabla \phi_2=
\tau\Delta \mu,\\
&\mu= -\lambda \Delta\phi + \lambda\gamma(f'(\phi_1)-f'(\phi_2))-\frac{\lambda_e}{2}(\tr{F_1F^T}+\tr{FF_2^T}).
\end{split}
\end{equation}
Next we test \eqref{diff}$_1$ by $u$, \eqref{diff}$_3$ by $F$, \eqref{diff}$_4$ by $\phi$, and then sum them up to obtain,
\begin{equation*}\begin{split}
\frac12 \frac{\ud }{\ud t}\int_\Omega |u|^2&+ |F|^2 + |\phi|^2 \ud x + \eta(\phi_1)|\nabla u|^2 + \tau\lambda| \Delta \phi|^2=-(u \cdot \nabla u_2,u) \\
&+(\nabla\cdot(\eta(\phi_1) -\eta(\phi_2))\nabla u_2, u) -(\nabla \phi_1 \otimes \nabla \phi + \nabla \phi_2\otimes \nabla\phi,\nabla u ) \\
&  +\lambda_e  ((1-\phi_1)(F_1F^T+FF_2^T)-\phi(F_2F_2^T-I),\nabla u)\\
&-\left(\eta(\phi_1)\frac{(1-\phi_1)u_1}{\kappa(\phi_1)}-\eta(\phi_2)\frac{(1-\phi_2)u_2}{\kappa(\phi_2)},u\right)\\
& -\tr(u\cdot \nabla F_2,F^T) + \tr(\nabla u_1F + \nabla u F_2,F^T)\\
&-(u \cdot \nabla  \phi_2,\phi)-(\nabla(f'(\phi_1)-f'(\phi_2)),\nabla \phi)+(\frac{\lambda_e}{2}\nabla( \tr{F_1F^T}+\tr{FF_2^T}),\nabla \phi).
\end{split}\end{equation*}
Below we will treat the terms, appearing on the right hand side above, that are more difficult to handle and leave the rest to the reader since the argument follows closely the a priori estimates found earlier. Observe that using interpolation inequalities we obtain,
\begin{align*}
|(u \cdot \nabla u_2,u)| &\leq \frac{\alpha}{16}|\nabla u|^2 + C|u_2|_2^2|u|^2.
\end{align*}
The terms $(u\cdot \nabla \phi_2,\phi)$ and $\tr(u\cdot \nabla F_2,F^T)$ are treated similarly. Next, using \eqref{equivnorm}, \eqref{avs} and the generalized Poincar\'e inequality we also have\begin{align*}
|(\nabla \phi_1 \otimes \nabla \phi + \nabla \phi_2\otimes \nabla\phi,\nabla u )| &\leq C(|\phi_1|_3+ |\phi_2|_3)|\phi|^{\frac12}|\phi|_2^{\frac12}|\nabla u|\\
& \leq  \frac{\alpha}{16}|\nabla u|^2+\frac{\tau\lambda}{16}| \Delta\phi|^2 + C(|\phi_1|_3+ |\phi_2|_3)^4|\phi|^2.
\end{align*}
Similarly,
\begin{align*}
&|((1-\phi_1)(F_1F^T+FF_2^T)-\phi(F_2F_2^T-I),\nabla u)| \lesssim |\phi_1|_2(|F_1|_2 + |F_2|_2)|F||\nabla u| \\
& \hspace{3.2in}+ |\phi||F_2|_2^2|\nabla u|\\
&\hspace{2in}  \leq  \frac{\alpha}{16}|\nabla u|^2 + C|\phi_1|^2_2(|F_1|_2 + |F_2|_2)^2|F|^2 +C|\phi|^2|F_2|_2^4.
\end{align*}
Observe next that,
\begin{align*}
|\tr(\nabla u_1 F + \nabla uF_2 ,F)|&\leq |u_1|_3|F|^2 + |\nabla u||F_2|_2|F|\\
& \leq \frac{\alpha}{16}|\nabla u|^2 + C(|u_1|_3+|F_2|^2_2)|F|^2,
\end{align*}
Next,
\begin{align*}
(\nabla(f'(\phi_1)-f'(\phi_2)),\nabla \phi) &= (f''(\phi_1)\nabla \phi -(f''(\phi_1)-f''(\phi_2)) \nabla \phi_2,\nabla \phi).
\end{align*}
Observe that,
\begin{align*}
(f''(\phi_1)\nabla,\nabla \phi) &\geq -C|\nabla \phi|^2,
\shortintertext{and since $f''$ is quadratic,}
|(f''(\phi_1)-f''(\phi_2) \nabla \phi_2,\nabla \phi)| &\leq C|\phi||\phi_2|_2|\phi|_2(|\phi_1+ \phi_2|_2)\\
&\leq \frac{\tau\lambda}{16}|\Delta\phi|^2 + C|\phi|^2|\phi_2|^2_2(|\phi_1+ \phi_2|^2_2).
\end{align*}
{For the final term, observe that,}
\begin{align*}
|(\nabla ( \tr{F_1F^T}+\tr{FF_2^T}),\nabla \phi)|
& = |( \tr{F_1F^T}+\tr{FF_2^T},\Delta \phi)|\\
& \lesssim (|F_1|_2| +|F_2|_2 )|F||\phi|_2\\
& \leq \frac{\tau\lambda}{16}|\Delta \phi|^2 + C(|F_1|_2| +|F_2|_2)^2|F|^2.
\end{align*}
Combining all the estimates derived above, we obtain,
\begin{equation}
\begin{split}
\frac12 \frac{\ud }{\ud t}\int_\Omega |u|^2+ |F|^2 +& |\phi|^2 \ud x  \leq C\mathcal{G}(|u|^2+ |F|^2 + |\phi|^2),
\end{split}
\end{equation}
where $\mathcal{G}=(|u_2|_2^2 + |u_1|_3+ |\phi_1|^4_3+ |\phi_2|^4_3 + |F_1|_2^4 + |F_2|^4)$.
Observing that $\mathcal{G} \in L^1(0,T_0)$, an application of the Gr\"onwall inequality gives us the desired uniqueness result and thus we conclude the proof of Theorem \ref{main}.
\end{proof}
\phantom\qedhere 
\end{proof}
\section{Acknowledgments}
This work was supported by the Research Fund of Indiana University.

\bibliography{chns_biblio}
\bibliographystyle{plain}
\end{document}